\NeedsTeXFormat{LaTeX2e}
[1994/12/01]
\documentclass{ijmart-mod}
\chardef\bslash=`\\ 





\hfuzz1pc 

\usepackage{bm}
\usepackage{graphicx}
\usepackage[breaklinks=true]{hyperref}
\usepackage{mathtools}
\usepackage{caption}
\usepackage{amsmath}
\usepackage{array}
\usepackage{multirow}
\usepackage{soul}


\newtheorem{thm}{Theorem}[section]

\newtheorem{lem}[thm]{Lemma}
\newtheorem{prop}[thm]{Proposition}

\theoremstyle{definition}

\newtheorem{rem}[thm]{Remark}

\theoremstyle{remark}



\newcommand{\eval}[2][\right]{\relax
  \ifx#1\right\relax \left.\fi#2#1\rvert}




\begin{document}
\title{Kissing polytopes in dimension $3$}

\author[A. Deza]{Antoine Deza}
\address{McMaster University, Hamilton, Ontario, Canada}
\email{deza@mcmaster.ca} 

\author[Z. Liu]{Zhongyuan Liu}
\address{McMaster University, Hamilton, Ontario, Canada}
\email{liu164@mcmaster.ca} 

\author[L. Pournin]{Lionel Pournin}
\address{Universit{\'e} Paris 13, Villetaneuse, France}
\email{lionel.pournin@univ-paris13.fr}

\begin{abstract}
It is shown that the smallest possible distance between two disjoint lattice polytopes contained in the cube $[0,k]^3$ is exactly
$$
\frac{1}{\sqrt{2(2k^2-4k+5)(2k^2-2k+1)}}
$$
for every integer $k$ at least $4$. The proof relies on modeling this as a minimization problem over a subset of the lattice points in the hypercube $[-k,k]^9$. A precise characterization of this subset allows to reduce the problem to computing the roots of a finite number of degree at most $4$ polynomials, which is done using symbolic computation.
\end{abstract}


\maketitle

\section{Introduction}\label{DLP2.sec.1}

A polytope is the convex hull of finitely many points from $\mathbb{R}^d$ and in the special case when these points form a subset of $\mathbb{Z}^d$, this object is referred to as a \emph{lattice polytope} or sometimes as an integral polytope. Lattice polytopes appear in areas as diverse as optimization \cite{BarbatoGrappeLacroixLancini2023,CardinalSteiner2025,DelPiaMichini2016,DelPiaMichini2022,DezaPournin2018}, combinatorics \cite{BaranyVershik1991,BeckPixton2003,CardinalPournin2025,DavidPourninRakotonarivo2020,DezaPournin2022,KnauerMartnez-SandovalRamirezAlfonsin2018}, or algebraic topology \cite{Fulton1993,Laplante-Anfossi2022,Loday2004,MasudaThomasTonksVallette2021}. They are integer multiples of rational polytopes and as such, they can serve as convex body approximations for computational procedures \cite{BeckRobins2015}. The stopping criterion for certain such procedures depends on how close two such disjoint lattice polytopes can be, as for example in von Neumann's alternating projections algorithm that decides whether two convex bodies are disjoint \cite{BraunPokuttaWeismantel2022,VonNeumann1949}. Without any additional constraint, the distance between two disjoint lattice polytopes $P$ and $Q$ can be arbitrarily small but such constraints arise in practice. One can ask for example for how close can two disjoint lattice polytopes be, provided that the combined size of their binary encoding is bounded by a constant \cite{BraunPokuttaWeismantel2022,DezaOnnPokuttaPournin2024,Schrijver1998}.

\begin{table}[b]
\begin{center}
\begin{tabular}{>{\centering}p{0.8cm}cccc}
\multirow{2}{*}{$d$}& \multicolumn{4}{c}{$k$}\\
\cline{2-5}
 & $1$ & $2$  & $3$  & $k\geq4$\\
\hline & \\[-1.1\bigskipamount]
$2$ & $\sqrt{2}$ & $\sqrt{5}$ & $\sqrt{13}$  & $\sqrt{(k-1)^2+k^2}$\\
$3$ & $\sqrt{6}$ & $5\sqrt{2}$ & $\sqrt{299}$ & $\bm{\sqrt{2(2k^2-4k+5)(2k^2-2k+1)}}$\\
$4$ & $3\sqrt{2}$ & $2\sqrt{113}$ & $11\sqrt{71}$\\
$5$ & $\sqrt{58}$\\
$6$ & $\sqrt{202}$\\
\end{tabular}
\end{center}
\caption{The known values of $1/\varepsilon(d,k)$. The formula shown in bold is provided by Theorem \ref{DLP2.sec.1.thm.1}.}\label{DLP2.sec.1.tab.1}
\end{table}

A similar, combinatorial constraint is to require that $P$ and $Q$ are contained in the hypercube $[0,k]^d$ where $k$ is a fixed positive integer. Throughout the article, we will refer to such polytopes as \emph{lattice $(d,k)$-polytopes}. Since, there is only finitely many pairs of disjoint lattice $(d,k)$-polytopes, the smallest possible distance $\varepsilon(d,k)$ is well defined and one can ask for its value. We call \emph{kissing polytopes} two lattice $(d,k)$-polytopes whose distance is exactly $\varepsilon(d,k)$: even though they do not touch, they cannot get any closer. Lower and upper bounds on $\varepsilon(2,k)$ that are almost matching as $d$ goes to infinity have been given in \cite{DezaOnnPokuttaPournin2024} and a formula for $\varepsilon(2,k)$ in \cite{DezaLiuPournin2024} along with the exact value of $\varepsilon(d,k)$ when $d$ and $k$ are sufficiently small for the computations to be tractable.

One may alternatively ask, also under relevant constraints, for how flat a lattice polytope can be or for how close one of its faces can be from its other vertices. These questions arise from continuous optimization or combinatorial problems and have been studied for instance in \cite{AlonVu1997,BeckShtern2017,GutmanPena2018,Lacoste-JulienJaggi2015,Pena2019}.

We extend the ideas and techniques from \cite{DezaLiuPournin2024} to the $3$-dimensional situation and establish the following formula for $\varepsilon(3,k)$.

\begin{thm}\label{DLP2.sec.1.thm.1}
If $k$ is not equal to $3$, then
\begin{equation}\label{DLP2.sec.1.thm.1.eq.1}
\varepsilon(3,k)=\frac{1}{\sqrt{2(2k^2-4k+5)(2k^2-2k+1)}}\mbox{.}
\end{equation}
\end{thm}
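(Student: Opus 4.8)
The plan is to prove the two inequalities $\varepsilon(3,k)\leq E(k)$ and $\varepsilon(3,k)\geq E(k)$ separately, where $E(k)$ denotes the right-hand side of \eqref{DLP2.sec.1.thm.1.eq.1}. The upper bound is the easy half: I would exhibit an explicit pair of disjoint lattice $(3,k)$-polytopes whose distance equals $E(k)$. Guided by the two-dimensional construction in \cite{DezaLiuPournin2024}, the natural candidates are two segments (or a segment and a point) whose endpoints are lattice points chosen near opposite corners of $[0,k]^3$ so that the supporting hyperplanes separating them have normal vectors with large, nearly-collinear integer coordinates; plugging the coordinates into the point-to-polytope distance formula should yield exactly $E(k)$. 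One then checks disjointness by producing a separating hyperplane. This step is a finite verification and I do not expect difficulty, beyond the bookkeeping of writing down the right vertices as functions of $k$.

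The lower bound is the substance of the theorem. Suppose $P$ and $Q$ are disjoint lattice $(3,k)$-polytopes at distance $\varepsilon(3,k)$. By compactness and convexity the distance is attained between a point $x\in P$ and a point $y\in Q$, and $x$, $y$ lie on faces of $P$ and $Q$ whose affine hulls are orthogonal to $x-y$. In dimension $3$ the relevant faces have dimension at most $2$, so each of $x$ and $y$ is a convex combination of at most $3$ vertices of the respective polytope; replacing $P$ and $Q$ by these vertex subsets, we may assume $P$ and $Q$ are simplices with at most $3$ lattice-point vertices each. The pair of nearest points is then determined by at most two triangles in $[0,k]^3$, i.e.\ by at most six lattice points, but after removing a common translation and exploiting that only the \emph{affine hulls} matter one reduces the data to a point of $\mathbb{Z}^9$ lying in $[-k,k]^9$ — this is the reformulation announced in the abstract. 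The squared reciprocal distance becomes a rational function on this lattice set, and minimizing $\varepsilon$ is maximizing that function.

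The heart of the argument is then a careful combinatorial characterization of which configurations in $[-k,k]^9$ can possibly be optimal. I would argue that at an optimum the two affine hulls are "taut" against the cube — several of the defining coordinates are pushed to $0$ or $k$ — because otherwise a local perturbation decreases the distance, contradicting optimality; this kills most of the nine degrees of freedom. What remains is a bounded list of parametric families, each governed by one or two free integer parameters, on which the objective is an explicit rational function of $k$ and those parameters. Within each family the optimization over the continuous relaxation of the parameters reduces to locating real roots of polynomials of degree at most $4$ — here one invokes symbolic computation, as the abstract indicates — after which one rounds to the admissible integer values and compares the finitely many resulting candidate distances against $E(k)$. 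The case $k=3$ is genuinely exceptional and is simply excluded.

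I expect the main obstacle to be the combinatorial case analysis establishing that an optimal configuration must lie in the short list of parametric families: ruling out the many ways two triangles can be positioned in the cube, and showing each non-tight configuration admits a distance-decreasing move, is where the real work and the risk of missing a case lie. The degree-$\leq 4$ polynomial root computations, by contrast, are mechanical once the families are pinned down. One should also double-check the small exceptional behaviour — the theorem deliberately excludes $k=3$, and Table~\ref{DLP2.sec.1.tab.1} records that $1/\varepsilon(3,3)=\sqrt{299}$ rather than the value $\sqrt{2(2\cdot9-12+5)(2\cdot9-6+1)}=\sqrt{2\cdot11\cdot13}=\sqrt{286}$ predicted by the formula, so the characterization must be organized so that the family responsible for the $k=3$ anomaly is transparently identified.
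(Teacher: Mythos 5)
Your overall architecture (an explicit extremal pair for the upper bound; an encoding into $[-k,k]^9$ followed by a finite symbolic computation over degree at most $4$ polynomials for the lower bound) is the paper's, but two of your key reduction steps have genuine gaps. First, your Carath\'eodory-type argument only yields two simplices with at most three vertices each, whose dimensions can sum to as much as four; the nine-coordinate encoding (a $3\times2$ matrix $A$ together with a vector $b$) requires a pair of simplices whose dimensions sum to $d-1=2$, and ``only the affine hulls matter'' is not something obtained by removing a translation: it is the nontrivial result of \cite{DezaOnnPokuttaPournin2024} that $\varepsilon(3,k)$ is attained by such a pair with disjoint affine hulls, which the paper invokes and then supplements with a separate argument (Theorem \ref{DLP2.sec.3.thm.0}, via Lemmas \ref{DLP2.sec.3.lem.1} and \ref{DLP2.sec.3.lem.2}) eliminating the point--triangle case, so that only pairs of segments remain. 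Without these steps your configuration space is not $[-k,k]^9$ and the announced reduction does not get off the ground.

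Second, your ``tautness'' step is not sound as stated: a local perturbation of an optimal lattice pair that decreases the distance only contradicts minimality if the perturbed pair is still a pair of disjoint lattice $(3,k)$-polytopes, and a distance-decreasing move may simply make the distance zero; moreover the objective is $|f(x)|/\sqrt{g(x)}$, so one must first pin down $|f(x)|=1$ (the paper's Theorem \ref{DLP2.sec.4.thm.1}, valid for $k\geq5$) before minimizing the distance becomes maximizing $g$. The paper's mechanism is structurally different and sidesteps realizability entirely: it relaxes to the set $\mathcal{Z}(k)$, uses the signs of the partial derivatives of $g$ to push any point with $h(x)\leq6k-6$ to a finite, $k$-independent family $\mathcal{B}$ without decreasing $g$ (Theorem \ref{DLP2.sec.4.thm.2}), and then uses the explicit upper bound (\ref{DLP2.sec.3.eq.1}) to force $h(x)\geq6k-5$ at the optimum, landing in the finite set $\mathcal{A}$ where the symbolic computation (Propositions \ref{DLP2.sec.4.prop.1} and \ref{DLP2.sec.4.prop.2}) takes over; the intermediate points need not correspond to any pair of polytopes, which is exactly what makes the monotone moves legitimate. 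Finally, this machinery only applies for $k\geq6$: the cases $k\in\{1,2,4,5\}$ are settled by the computed values reported in \cite{DezaLiuPournin2024}, a step absent from your sketch (your correct observation about the $k=3$ anomaly does not cover them).
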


All the known values of $\varepsilon(d,k)$ are reported in Table \ref{DLP2.sec.1.tab.1} and one can see that~(\ref{DLP2.sec.1.thm.1.eq.1}) does not hold when $k$ is equal to $3$. In the case when $k$ is at least $6$ we will also show that, up to the symmetries of the cube, $\varepsilon(3,k)$ is uniquely achieved by the pair $P^\star$ and $Q^\star$ of line segments such that the vertices of $P^\star$ are the lattice points $(k,2,1)$ and $(0,k-1,k)$ while $Q^\star$ has for its extremities, the origin of $\mathbb{R}^3$ and the lattice point $(k-1,k,k)$. These two line segments are depicted in Figure \ref{DLP2.sec.1.fig.1} when $k$ is equal to $2$ and when $k$ is at least $4$. The figure also shows pairs of line segments that achieve $\varepsilon(3,1)$ and $\varepsilon(3,3)$.

It is shown in \cite{DezaOnnPokuttaPournin2024} that $\varepsilon(d,k)$ is always achieved as the distance between two lattice $(d,k)$-simplices whose dimensions sum to $d-1$. In the $2$-dimensional case, it therefore suffices to consider a point and a line segment. In the $3$\nobreakdash-dimensional case however, we need to consider both the distance between a point and a triangle and the distance between two line segments. As an intermediate step to proving Theorem \ref{DLP2.sec.1.thm.1}, we will show that the former case can be ignored.

\begin{figure}[b]
\begin{centering}
\includegraphics[scale=1]{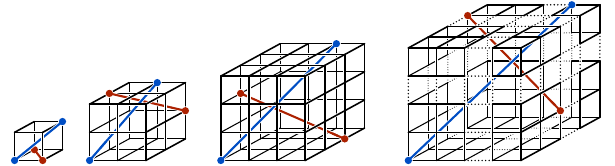}
\caption{Pairs of lattice $(3,k)$-segments that achieve $\varepsilon(3,k)$ for $k$ equal to $1$, $2$, $3$, and at least $4$ (from left to right).}\label{DLP2.sec.1.fig.1}
\end{centering}
\end{figure}

\begin{thm}\label{DLP2.sec.3.thm.0}
Consider a lattice point $P$ contained in $[0,k]^3$ and a lattice $(3,k)$-triangle $Q$. If $P$ is not contained in $Q$, then
$$
d(P,Q)>\varepsilon(3,k)\mbox{.}
$$
\end{thm}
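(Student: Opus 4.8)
The plan is to split on which face of $Q$ carries the point $q$ of $Q$ that is closest to $P$. Since $Q$ is compact and convex and $P\notin Q$, the point $q$ is unique and lies in the relative interior of exactly one face of $Q$: a vertex, an edge, or $Q$ itself. If $q$ is a vertex, then $q$ is a lattice point different from $P$, so $d(P,Q)=\|P-q\|\geq1>\varepsilon(3,k)$ and we are done. In the other two cases $q$ is the orthogonal projection of $P$ onto the affine hull of the face containing it, so $d(P,Q)$ is the distance from $P$ to a line (edge case) or to a plane ($2$-face case), and I would bound each below; throughout I will use the upper bound $\varepsilon(3,k)\leq 1/\sqrt{2(2k^2-4k+5)(2k^2-2k+1)}$ coming from the explicit pair of lattice $(3,k)$-segments, so it suffices to show $d(P,Q)$ exceeds that quantity.

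For the edge case, let $a$ and $b$ be the endpoints of the edge, so $d(P,Q)=\|(P-a)\times(b-a)\|/\|b-a\|$. The vector $(P-a)\times(b-a)$ has integer coordinates and is nonzero, since otherwise $P$ would lie on the line through $a$ and $b$, forcing $q$ to be an endpoint or $P\in Q$; hence the numerator is at least $1$, while $\|b-a\|^2\leq 3k^2$ because $b-a\in\{-k,\dots,k\}^3$. Thus $d(P,Q)\geq 1/(\sqrt{3}\,k)$, and it remains to verify the elementary inequality $2(2k^2-4k+5)(2k^2-2k+1)>3k^2$ for every $k\geq1$, which follows from the sharper inequality below.

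For the $2$-face case, write $a,b,c$ for the vertices of $Q$, put $u=b-a$ and $v=c-a$, and let $n=u\times v$ be the resulting nonzero integer normal of $\mathrm{aff}(Q)$, so that $d(P,Q)=|\langle n,P-a\rangle|/\|n\|$. Here $\langle n,P-a\rangle=\det(P-a,u,v)$ is a nonzero integer, nonzero because $P\notin\mathrm{aff}(Q)$ (else $q=P\in Q$), hence at least $1$ in absolute value, so $d(P,Q)\geq 1/\|u\times v\|$. The crux is the estimate $\|u\times v\|^2\leq 3k^4$ for every lattice triangle in $[0,k]^3$: each coordinate of $u\times v$ equals, up to sign, $u_iv_j-u_jv_i$, which is twice the area of the image of $Q$ under orthogonal projection onto the $e_ie_j$-plane; that image is a (possibly degenerate) triangle contained in the square $[0,k]^2$, and any triangle has area at most half that of its axis-aligned bounding box, so at most $k^2/2$ here. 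Therefore $|u_iv_j-u_jv_i|\leq k^2$ for each of the three coordinates and $\|u\times v\|^2\leq 3k^4$, giving $d(P,Q)\geq 1/(\sqrt{3}\,k^2)$. The proof then finishes by checking $2(2k^2-4k+5)(2k^2-2k+1)>3k^4$, i.e. $5k^4-24k^3+40k^2-28k+10>0$, for all $k\geq1$: this holds by direct evaluation for $k\in\{1,2,3,4\}$ and because $5k^4\geq24k^3$ once $k\geq5$.

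The main obstacle is the estimate $\|u\times v\|^2\leq 3k^4$. The crude bound $\|u\|^2\|v\|^2\leq9k^4$, or the cross-section bound $\|u\times v\|\leq 2\sqrt{2}\,k^2$, is too weak: it would only give $d(P,Q)\geq 1/(2\sqrt{2}\,k^2)$, which is smaller than $\varepsilon(3,k)$. One must genuinely exploit that the triangle fits inside the cube, and the projection–bounding-box argument does exactly this. The resulting margin is moreover thin for small $k$ — the polynomial $5k^4-24k^3+40k^2-28k+10$ equals $2$ at $k=2$ — so the estimate has to be carried out with the right constant and the few small values of $k$ verified explicitly; once this is in hand, the edge and vertex cases are routine.
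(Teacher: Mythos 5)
Your proof is correct, and it takes a genuinely different route from the paper's. The paper splits on whether $P$ lies in $\mathrm{aff}(Q)$: when it does not (Lemma~\ref{DLP2.sec.3.lem.1}), it works inside the optimization model of Section~\ref{DLP2.sec.2}, invoking \cite{DezaLiuPournin2024} for the distance formula and the non-singularity of $A^tA$, and then bounds each $2\mathord{\times}2$ minor of $A$ by $k^2$ through a sign case analysis after using cube symmetries to make $q^2-q^0$ non-negative; when $P$ does lie in $\mathrm{aff}(Q)$ (Lemma~\ref{DLP2.sec.3.lem.2}), it projects to a coordinate plane and invokes the monotonicity $\varepsilon(2,k)>\varepsilon(3,k)$ from \cite{DezaOnnPokuttaPournin2024}. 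You instead split on the face of $Q$ whose relative interior contains the nearest point $q$, dispose of the vertex and edge cases by direct lattice bounds (distance at least $1$, respectively at least $1/(\sqrt{3}k)$ from the integrality of $(P-a)\times(b-a)$), and in the $2$-face case use the point--plane formula together with the estimate $\|u\times v\|^2\leq 3k^4$, obtained by noting that each coordinate of $u\times v$ is twice the area of the projection of $Q$ onto a coordinate square of side $k$ and that a triangle covers at most half of any rectangle containing it. That estimate is exactly the paper's bound $g(x)\leq 3k^4$, reached by an area argument rather than a sign analysis, and both proofs close with the same polynomial inequality $5k^4-24k^3+40k^2-28k+10>0$ (Proposition~\ref{DLP2.sec.3.prop.1}). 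What your route buys is self-containedness: no appeal to $\varepsilon(2,k)$, to the monotonicity theorem, or to the non-singularity results of the earlier papers, while still capturing the crucial constant $3k^4$ that cruder bounds such as $\|u\|^2\|v\|^2\leq 9k^4$ miss; what the paper's route buys is that its argument is phrased entirely within the model of Section~\ref{DLP2.sec.2}, which is then reused unchanged for the two-segment analysis of Section~\ref{DLP2.sec.4}. (Only cosmetic caveats: the half-bounding-box fact deserves its one-line justification via affineness of the area in each vertex, and for $k=1$ the upper bound on $\varepsilon(3,1)$ comes from the reported value rather than from $P^\star,Q^\star$, exactly as the paper itself notes.)
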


As a preliminary to the proof of these theorems, we describe in Section \ref{DLP2.sec.2} the optimization model from \cite{DezaLiuPournin2024} that allows to provide lower bounds on $\varepsilon(d,k)$. We then use that model in Section \ref{DLP2.sec.3} to prove Theorem \ref{DLP2.sec.3.thm.0} and in Section \ref{DLP2.sec.4} to prove Theorem \ref{DLP2.sec.1.thm.1} by reducing the problem to studying a finite number of polynomial expressions using symbolic computation.

\section{A discrete optimization model}\label{DLP2.sec.2}

Consider two lattice $(d,k)$\nobreakdash-simplices $P$ and $Q$ whose dimensions sum to $d-1$. In~\cite{DezaLiuPournin2024}, the distance between $P$ and $Q$ is lower bounded as follows. Denote by $p^0$ to $p^n$ the vertices of $P$ and by $q^0$ to $q^m$ the vertices of $Q$. Consider the $d\mathord{\times}(d-1)$ matrix $A$ whose $j$th column is $p^j-p^0$ when $j$ is at most $n$ and $q^{j-n}-q^0$ otherwise. Further consider the vector $b$ equal to $q^0-p^0$. It is shown in \cite{DezaLiuPournin2024} (see Lemma 2 therein) that if $A^tA$ is non-singular, then
\begin{equation}\label{DLP2.sec.2.eq.1}
d\bigl(\mathrm{aff}(P),\mathrm{aff}(Q)\bigr)=\|A(A^tA)^{-1}A^tb-b\|
\end{equation}
where $\mathrm{aff}(P)$ and $\mathrm{aff}(Q)$ denote the affine hulls of $P$ and $Q$. It follows that the distance between $P$ and $Q$ is at least the right-hand side of (\ref{DLP2.sec.2.eq.1}).

\begin{rem}\label{DLP2.sec.2.rem.1}
Observe that $A$ and $b$ are by no means canonical since exchanging $P$ and $Q$ or relabeling their vertices will result in a different matrix $A$ and a different vector $b$. One can also consider $P$ and $Q$ up to the symmetries of the hypercube, which amounts to permuting or negating a subset of the rows of $A$ and performing the same transformation on the coordinates of $b$. Any such operation does not change (\ref{DLP2.sec.2.eq.1}) or the determinant of $A^tA$. As remarked in \cite{DezaLiuPournin2024} the same is true when a subset of the columns of $A$ is negated.
\end{rem}

This construction allows to provide a lower bound on $\varepsilon(d,k)$. Indeed, building on results from~\cite{DezaOnnPokuttaPournin2024} it is shown in~\cite{DezaLiuPournin2024} (see Lemma 2 and Proposition 4 therein) that $P$ and $Q$ can be chosen in such a way that the distance between these simplices is precisely $\varepsilon(d,k)$ while $A^tA$ is non-singular for any matrix $A$ built from $P$ and $Q$. It is then immediate that $\varepsilon(d,k)$ is at least the right-hand side of (\ref{DLP2.sec.2.eq.1}) when $A$ and $b$ correspond to such a pair of lattice simplices.

Let us now assume from now on that $d$ is equal to $3$. In that case, $A$ is a $3\mathord{\times}2$ matrix with integer coefficients and $b$ is a vector from $\mathbb{Z}^3$. Moreover, the absolute value of the coefficients of $A$ and $b$ is at most $k$ because they are differences of two non-negative numbers at most $k$. Now recall that $P$ and $Q$ are simplices whose dimensions sum to $2$. These two simplices are therefore either two line segments or a point and a triangle. In the latter case, we can assume without loss of generality that $P$ is the point while $Q$ is the triangle and by the above construction, the first column of $A$ is $q^1-q^0$ and its second column is $q^2-q^0$. In the former case, the first column of $A$ is $p^1-p^0$ and the second one is $q^1-q^0$. In both cases, we turn the matrix $A$ and the vector $b$ into the lattice point $x$ contained in the hypercube $[-k,k]^9$ whose coordinates $x_1$ to $x_6$ are obtained by identification from the coefficients of $A$ as
\begin{equation}\label{DLP2.sec.2.eq.2}
\left[
\begin{array}{cc}
x_1 & x_4\\
x_2 & x_5\\
x_3 & x_6\\
\end{array}
\right]=A
\end{equation}
and whose coordinates $x_7$ to $x_9$ from those of $b$ as
\begin{equation}\label{DLP2.sec.2.eq.2.1}
\left[
\begin{array}{cc}
x_7\\
x_8\\
x_9\\
\end{array}
\right]=b\mbox{.}
\end{equation}

In the sequel, we denote by $\mathcal{X}(k)$ the set of all the lattice points $x$ contained in $[-k,k]^9$ that can be obtained as we have just described from a pair of disjoint lattice $(3,k)$\nobreakdash-simplices $P$ and $Q$ whose dimensions sum to $2$.

Let us now consider an arbitrary $3\mathord{\times}2$ matrix $A$ and an arbitrary vector $b$ in $\mathbb{R}^3$, both with integer coefficients of absolute value at most $k$. Here, we no longer assume that $A$ and $b$ are obtained from a pair of lattice polytopes but we can still associate to them a lattice point $x$ from $[-k,k]^9$ via (\ref{DLP2.sec.2.eq.2}) and (\ref{DLP2.sec.2.eq.2.1}). In that case, the determinant of $A^tA$ is equal to $g(x)$ where
\begin{equation}\label{DLP2.sec.2.eq.0.5}
g(x)=(x_1x_5-x_2x_4)^2+(x_1x_6-x_3x_4)^2+(x_2x_6-x_3x_5)^2\mbox{.}
\end{equation}

Note that this expression for the determinant of $A^tA$ can be recovered from the Cauchy--Binet formula \cite[Example 10.31]{ShafarevichRemizov2013}. It is observed in \cite{DezaLiuPournin2024} that, when the matrix $A^tA$ is non-singular or equivalently when $g(x)$ is not equal to $0$, the right-hand side of~(\ref{DLP2.sec.2.eq.1}) can be expressed in terms of $x$ as
\begin{equation}\label{DLP2.sec.2.eq.3}
\|A(A^tA)^{-1}A^tb-b\|=\frac{|f(x)|}{\sqrt{g(x)}}
\end{equation}
where $f$ is the function of $x$ defined as 
\begin{equation}\label{DLP2.sec.2.eq.0}
f(x)=x_1(x_6x_8-x_5x_9)+x_2(x_4x_9-x_6x_7)+x_3(x_5x_7-x_4x_8)\mbox{.}
\end{equation}

The above mentioned results of \cite{DezaLiuPournin2024,DezaOnnPokuttaPournin2024} can therefore be collected into the following theorem in the special case when $d$ is equal to $3$.

\begin{thm}\label{DLP2.sec.2.thm.1}
For every positive integer $k$, there exists a lattice point $x$ in $\mathcal{X}(k)$ such that $f(x)$ is non-zero, $g(x)$ is positive, and
$$
\varepsilon(3,k)\geq\frac{|f(x)|}{\sqrt{g(x)}}\mbox{.}
$$
\end{thm}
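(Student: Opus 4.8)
The plan is to obtain \thmref{DLP2.sec.2.thm.1} by collecting the structural results of \cite{DezaLiuPournin2024,DezaOnnPokuttaPournin2024} recalled above and translating them through the identities (\ref{DLP2.sec.2.eq.1}), (\ref{DLP2.sec.2.eq.0.5}), and (\ref{DLP2.sec.2.eq.3}). First I would invoke \cite{DezaOnnPokuttaPournin2024} to fix a pair of disjoint lattice $(3,k)$-simplices $P$ and $Q$ whose dimensions sum to $2$ and whose distance is exactly $\varepsilon(3,k)$, and then sharpen this choice using \cite[Lemma~2 and Proposition~4]{DezaLiuPournin2024} so that, in addition, $A^tA$ is non-singular for every matrix $A$ built from $P$ and $Q$ by the construction of Section~\ref{DLP2.sec.2}. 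Fixing such an $A$, the associated vector $b=q^0-p^0$, and the lattice point $x$ produced from $A$ and $b$ via (\ref{DLP2.sec.2.eq.2}) and (\ref{DLP2.sec.2.eq.2.1}), one has $x\in[-k,k]^9$ since the coordinates of the vertices of $P$ and $Q$ lie in $[0,k]$, and $x\in\mathcal{X}(k)$ by the very definition of that set.

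The next step is essentially formal. The matrix $A^tA$ is the Gram matrix of the two columns of $A$, hence symmetric and positive semidefinite, and since it is non-singular it is in fact positive definite; its determinant, which equals $g(x)$ by the Cauchy--Binet expansion (\ref{DLP2.sec.2.eq.0.5}), is therefore strictly positive. As $g(x)\neq0$, the identities (\ref{DLP2.sec.2.eq.1}) and (\ref{DLP2.sec.2.eq.3}) apply and yield
\[
\frac{|f(x)|}{\sqrt{g(x)}}=\|A(A^tA)^{-1}A^tb-b\|=d\bigl(\mathrm{aff}(P),\mathrm{aff}(Q)\bigr)\mbox{.}
\]
Since $P\subseteq\mathrm{aff}(P)$ and $Q\subseteq\mathrm{aff}(Q)$, the distance between the two affine hulls is at most the distance between the two polytopes, so that $|f(x)|/\sqrt{g(x)}\leq d(P,Q)=\varepsilon(3,k)$, which is the inequality asserted by the theorem.

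It remains to check that $f(x)$ does not vanish, and this is the main (indeed only) point requiring attention. By the displayed identity, $f(x)=0$ would force $d(\mathrm{aff}(P),\mathrm{aff}(Q))=0$, that is, the two affine hulls---two lines in the case of two segments, or a point and a plane in the case of a point and a triangle---would intersect. To rule this out I would appeal once more to the refinement furnished by \cite[Lemma~2 and Proposition~4]{DezaLiuPournin2024}, namely that the optimal pair can moreover be chosen so that $\mathrm{aff}(P)$ and $\mathrm{aff}(Q)$ are disjoint (equivalently, $f(x)\neq0$); concretely, one may arrange that points of $P$ and $Q$ realizing $d(P,Q)$ lie in the relative interiors of $P$ and $Q$, so that the segment joining them is orthogonal to both affine hulls and hence realizes $d(\mathrm{aff}(P),\mathrm{aff}(Q))$, giving $d(\mathrm{aff}(P),\mathrm{aff}(Q))=d(P,Q)=\varepsilon(3,k)$. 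Because $\varepsilon(3,k)>0$---there are finitely many disjoint pairs of lattice $(3,k)$-simplices and two disjoint compact convex sets are at positive distance---this forces $f(x)\neq0$. I expect this last step, i.e. producing an optimal pair whose affine hulls do not meet, to be the crux; everything else is bookkeeping around the identities of Section~\ref{DLP2.sec.2}.
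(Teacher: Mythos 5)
Your proposal is correct and follows essentially the same route as the paper, which likewise obtains the statement by collecting the cited results: an optimal pair of lattice $(3,k)$-simplices with dimensions summing to $2$ and \emph{disjoint affine hulls} (this disjointness, which is what gives $f(x)\neq0$, comes from \cite[Theorem 5.2]{DezaOnnPokuttaPournin2024} rather than from \cite[Lemma 2 and Proposition 4]{DezaLiuPournin2024}, which supply the non-singularity of $A^tA$), followed by the translation through (\ref{DLP2.sec.2.eq.1}), (\ref{DLP2.sec.2.eq.0.5}), and (\ref{DLP2.sec.2.eq.3}) and the bound $d(\mathrm{aff}(P),\mathrm{aff}(Q))\leq d(P,Q)=\varepsilon(3,k)$. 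Apart from this minor attribution point, your argument matches the one the paper spells out later in the proof of Theorem \ref{DLP2.sec.3.thm.1}.
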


Let us conclude the section with an upper bound on $\varepsilon(3,k)$ obtained from the two line segments $P^\star$ and $Q^\star$ that we have described in the introduction. These two line segments depend on $k$ and can be defined for any integer $k$ at least $2$. It is observed in \cite{DezaLiuPournin2024} that, for any such integer $k$, 
$$
d(P^\star,Q^\star)=\frac{1}{\sqrt{2(2k^2-4k+5)(2k^2-2k+1)}}
$$
and since $P^\star$ and $Q^\star$ are lattice $(3,k)$-polytopes,
\begin{equation}\label{DLP2.sec.3.eq.1}
\varepsilon(3,k)\leq\frac{1}{\sqrt{2(2k^2-4k+5)(2k^2-2k+1)}}\mbox{.}
\end{equation}

Note that when $k$ is at most $8$, the values of $\varepsilon(3,k)$ are reported in \cite{DezaLiuPournin2024}. In particular, $\varepsilon(3,1)$ is $1/\sqrt{6}$ and therefore, (\ref{DLP2.sec.3.eq.1}) still holds (with equality) when $k$ is equal to $1$. For this reason we shall use this inequality for all positive $k$ in the sequel. The values reported in \cite{DezaLiuPournin2024} further show that this bound is always sharp when $k$ is at most $8$ but different from $3$. We shall see that this upper bound is in fact also sharp for every integer $k$ greater than $8$.

\section{The case of a point and a triangle}\label{DLP2.sec.3}

The goal of this section is to prove that the distance between a lattice point $P$ contained in $[0,k]^3$ and a lattice $(3,k)$-triangle $Q$ that does not contain this point is greater than $\varepsilon(3,k)$ when $k$ is at least $8$. It follows in particular that, for any such value of $k$, it suffices to consider pairs of lattice $(3,k)$-segments in order to compute $\varepsilon(3,k)$. We will consider two separate cases depending on whether $P$ belongs to the affine hull of $Q$ or not. Indeed, when it does not, it follows from \cite[Lemma 2]{DezaLiuPournin2024} and \cite[Proposition 4]{DezaLiuPournin2024} that either the distance between $P$ and $Q$ is greater than $\varepsilon(3,k)$ or for any matrix $A$ and vector $b$ obtained from them as explained in Section \ref{DLP2.sec.2}, $A^tA$ is non-singular. In the latter case, we can use (\ref{DLP2.sec.2.eq.1}) and (\ref{DLP2.sec.2.eq.3}) in order to lower bound the distance of $P$ and $Q$, where $x$ is the lattice point in $[-k,k]^9$ obtained from $A$ and $b$ via (\ref{DLP2.sec.2.eq.2}) and (\ref{DLP2.sec.2.eq.2.1}).

The proof will make use of the following straightforward statement, established by computing the roots of $5k^4-24k^3+40k^2-28k+10$.

\begin{prop}\label{DLP2.sec.3.prop.1}
For every positive integer $k$,
$$
\frac{1}{\sqrt{3}k^2}>\frac{1}{\sqrt{2(2k^2-4k+5)(2k^2-2k+1)}}\mbox{.}
$$
\end{prop}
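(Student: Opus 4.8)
The plan is to establish the claimed strict inequality by reducing it to showing that a single quartic polynomial in $k$ is strictly positive for every positive integer $k$. Squaring both sides and clearing denominators, the inequality $1/(\sqrt{3}k^2)>1/\sqrt{2(2k^2-4k+5)(2k^2-2k+1)}$ is equivalent, since both sides are positive, to
$$
2(2k^2-4k+5)(2k^2-2k+1)>3k^4\mbox{.}
$$
Expanding the left-hand side gives $2(4k^4-8k^3+2k^2-2k^3+4k^2-2k+10k^2-20k+10)$, which simplifies to $8k^4-20k^3+32k^2-24k+20$ after collecting terms; subtracting $3k^4$ from both sides, the inequality becomes exactly
$$
5k^4-24k^3+40k^2-28k+10>0\mbox{,}
$$
the quartic named in the statement of the proposition.

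It then remains to verify that $5k^4-24k^3+40k^2-28k+10$ is strictly positive for every positive integer $k$, and in fact it suffices to check it is positive for all real $k\geq1$ (positivity at $k=0$ being the constant term $10$, although only integer $k\geq1$ is needed). First I would confirm there are no real roots at all, or more simply, exhibit a sum-of-squares-type lower bound: for instance one can check directly that the polynomial evaluates to $3$, $34$, and $223$ at $k=1,2,3$, and that its derivative $20k^3-72k^2+80k-28$ is positive for $k$ sufficiently large while the leading term $5k^4$ dominates; a clean way is to write $5k^4-24k^3+40k^2-28k+10 = 5(k^2-\tfrac{12}{5}k)^2 + (40-\tfrac{144}{5})k^2 - 28k + 10$ and note $40-144/5 = 56/5>0$, then complete the square again on the remaining quadratic $\tfrac{56}{5}k^2-28k+10$, whose discriminant $784 - 4\cdot\tfrac{56}{5}\cdot 10 = 784 - 448 = 336 < 784$ is positive, so a little more care is needed — the cleanest route is simply to invoke symbolic computation (as the paper says) to confirm that all four complex roots of $5k^4-24k^3+40k^2-28k+10$ have real part or modulus placing them off the positive real axis, equivalently that the polynomial has no real root, hence being positive at $k=1$ it is positive for all real $k$.

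The only mild obstacle is that the quartic $5k^4-24k^3+40k^2-28k+10$ does not factor over the rationals and is not an obvious sum of squares, so a fully hand-checked positivity argument requires one or two rounds of completing the square together with a sign check on a leftover quadratic discriminant; since the statement is explicitly flagged as "straightforward" and "established by computing the roots," I would simply carry out that root computation symbolically, observe the polynomial has no positive real root, evaluate it at $k=1$ to get $5-24+40-28+10=3>0$, and conclude positivity on $[1,\infty)\supseteq\mathbb{Z}_{>0}$, which yields the proposition. I expect no genuine difficulty here beyond the bookkeeping of the expansion.
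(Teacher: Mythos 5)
Your proposal is correct in substance and takes essentially the same route as the paper: square and clear denominators to reduce the inequality to the positivity of $5k^4-24k^3+40k^2-28k+10$, then establish that positivity by computing the roots of this quartic (it has no real roots, and it is positive at $k=1$), which is exactly the paper's one-line proof. Do repair the arithmetic slips, though: the correct expansion is $2(2k^2-4k+5)(2k^2-2k+1)=8k^4-24k^3+40k^2-28k+10$ (not $8k^4-20k^3+32k^2-24k+20$), so subtracting $3k^4$ genuinely yields the named quartic; its values at $k=2,3$ are $2$ and $43$ (not $34$ and $223$); and, as you half-note, the completing-the-square attempt cannot close the argument because the leftover quadratic $\frac{56}{5}k^2-28k+10$ has positive discriminant, so the symbolic root computation is what actually carries the proof.
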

%

We first prove the following.

\begin{lem}\label{DLP2.sec.3.lem.1}
Consider a lattice point $P$ contained in the cube $[0,k]^3$ and a lattice $(3,k)$-triangle $Q$. If $P$ is not contained in the affine hull of $Q$, then the distance between $P$ and $Q$ is greater than $\varepsilon(3,k)$.
\end{lem}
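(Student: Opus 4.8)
The plan is to reduce the claim to an easy volume/determinant estimate. Since $P$ is not in the affine hull of $Q$, the discussion preceding the lemma shows we may assume $A^tA$ is non-singular (otherwise the distance already exceeds $\varepsilon(3,k)$), so by~(\ref{DLP2.sec.2.eq.1}) and~(\ref{DLP2.sec.2.eq.3}) the distance from $P$ to $\mathrm{aff}(Q)$ equals $|f(x)|/\sqrt{g(x)}$, and the distance from $P$ to $Q$ is at least this quantity. Here $x\in\mathcal{X}(k)$ is built from the matrix $A$ whose two columns are $q^1-q^0$ and $q^2-q^0$, and from $b=q^0-P$. The key observation is the geometric meaning of $f$ and $g$: $\sqrt{g(x)}=\|(q^1-q^0)\times(q^2-q^0)\|$ is twice the area of the triangle $Q$, while $|f(x)|=|\det[\,q^1-q^0\,,\,q^2-q^0\,,\,q^0-P\,]|$ is six times the volume of the tetrahedron with vertices $P,q^0,q^1,q^2$. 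In particular $|f(x)|/\sqrt{g(x)}$ is exactly three times the volume of that tetrahedron divided by the area of its base $Q$, i.e. the Euclidean distance from $P$ to $\mathrm{aff}(Q)$.

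Next I would bound the two quantities separately. Since $P\notin\mathrm{aff}(Q)$, the four lattice points $P,q^0,q^1,q^2$ are affinely independent, so the lattice simplex they span has volume at least $1/6$; hence $|f(x)|\geq1$. For the denominator, $q^1-q^0$ and $q^2-q^0$ are vectors whose coordinates have absolute value at most $k$, so each of the three $2\times 2$ minors appearing in~(\ref{DLP2.sec.2.eq.0.5}) has absolute value at most $2k^2$ — but this crude bound gives only $\sqrt{g(x)}\leq 2\sqrt{3}k^2$, which is not quite strong enough. The better estimate is the classical one for the norm of a cross product of two vectors in a cube of side $2k$: $\|u\times v\|\leq$ (product of norms) is useless, but Hadamard applied to the $2\times 3$ matrix, or rather the observation that $\|u\times v\|^2 = \|u\|^2\|v\|^2 - \langle u,v\rangle^2$, combined with a direct case analysis, yields $g(x)\leq 3k^4$ — and one checks this bound is attained only in degenerate configurations that can be excluded. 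Thus $|f(x)|/\sqrt{g(x)}\geq 1/(\sqrt{3}k^2)$.

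Finally, combining this with Proposition~\ref{DLP2.sec.3.prop.1}, which states $1/(\sqrt{3}k^2)>1/\sqrt{2(2k^2-4k+5)(2k^2-2k+1)}$, and with the upper bound~(\ref{DLP2.sec.3.eq.1}), namely $\varepsilon(3,k)\leq 1/\sqrt{2(2k^2-4k+5)(2k^2-2k+1)}$, we get
$$
d(P,Q)\geq\frac{|f(x)|}{\sqrt{g(x)}}\geq\frac{1}{\sqrt{3}k^2}>\frac{1}{\sqrt{2(2k^2-4k+5)(2k^2-2k+1)}}\geq\varepsilon(3,k)\mbox{,}
$$
which is the desired strict inequality. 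The main obstacle I anticipate is getting the sharp bound $g(x)\leq 3k^4$ on the squared cross-product norm: the naive minor-by-minor estimate loses a constant factor, so one needs either a slightly careful optimization over the cube $[-k,k]^6$ for the two column vectors, or a reduction using the symmetries of Remark~\ref{DLP2.sec.2.rem.1} (negating rows/columns) to a normalized configuration where the bound is transparent. Everything else is routine.
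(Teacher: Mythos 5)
Your overall skeleton matches the paper's proof: bound $d(P,Q)$ below by $|f(x)|/\sqrt{g(x)}$, note $|f(x)|\geq 1$ because $f(x)$ is a non-zero integer (the four points being affinely independent lattice points), then show $g(x)\leq 3k^4$ and conclude via Proposition~\ref{DLP2.sec.3.prop.1} and (\ref{DLP2.sec.3.eq.1}). However, the decisive step $g(x)\leq 3k^4$ is exactly where your argument has a genuine gap, and the routes you sketch for it cannot work. The bound is simply false for arbitrary vectors $u,v\in[-k,k]^3$: taking $u=(k,-k,k)$ and $v=(k,k,-k)$ gives $\|u\times v\|^2=8k^4$, and even after a normalization making $v$ componentwise non-negative one still gets $4k^4$ from $u=(k,-k,0)$, $v=(k,k,0)$. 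So neither the Lagrange identity $\|u\|^2\|v\|^2-\langle u,v\rangle^2$ (which only yields $9k^4$), nor Hadamard, nor ``a careful optimization over the cube $[-k,k]^6$'' can produce $3k^4$; and the constant matters, since for small $k$ (e.g. $k=2,3$) even $g\leq 4k^4$ would not beat $2(2k^2-4k+5)(2k^2-2k+1)$.

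The missing ingredient is the structure of the triangle: the two columns of $A$ are $q^1-q^0$ and $q^2-q^0$ with $q^0,q^1,q^2\in[0,k]^3$, so their componentwise difference equals $q^1-q^2$ and therefore also has all coordinates of absolute value at most $k$. The paper first uses the symmetries of the cube (acting on rows of $A$ and on $b$ simultaneously, which is legitimate; note that negating a \emph{single column} as in Remark~\ref{DLP2.sec.2.rem.1} would destroy the constraint on $q^1-q^2$, so that symmetry is not the one to use here) to assume $q^2-q^0\geq 0$ componentwise, and then runs a sign case analysis on each $2\times 2$ minor $x_ix_{j+3}-x_jx_{i+3}$: when $x_i$ and $x_j$ have the same sign the two products cannot have opposite signs, and when they have opposite signs the minor is bounded by $|x_{j+3}-x_j|\cdot\max\{x_i,x_{i+3}\}\leq k\cdot k$, using precisely $|q^2_j-q^1_j|\leq k$. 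This gives each minor at most $k^2$ in absolute value, hence $g(x)\leq 3k^4$. Your proposal never invokes the common base point $q^0$ or the bound on $q^1-q^2$, so as written the crucial inequality is unproved and your fallback strategies would fail; the rest of your argument (including $|f(x)|\geq 1$ and the final chain of inequalities) is fine and agrees with the paper.
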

\begin{proof}
Denote by $q^0$, $q^1$, and $q^2$ the vertices of $Q$. Up to the symmetries of the cube $[0,k]^3$, we can assume that all the coordinates of $q^2-q^0$ are non-negative. We will also consider the matrix $A$ and the vector $b$ obtained from $P$, $q^0$, $q^1$, and $q^2$ as explained in Section \ref{DLP2.sec.2}: the first column of $A$ is $q^1-q^0$ and its second column $q^2-q^0$ while $b$ is equal to $q^0-P$. Assume that $P$ does not belong to the affine hull of $Q$ and, for contradiction, that the distance between $P$ and $Q$ is $\varepsilon(3,k)$. In that case, according to \cite[Lemma 2]{DezaLiuPournin2024} and \cite[Proposition 4]{DezaLiuPournin2024} the matrix $A^tA$ is non-singular. Equivalently, $g(x)$ is positive where $x$ is the lattice point in the hypercube $[-k,k]^9$ whose coordinates are given by
$$
x_i=\left\{
\begin{array}{l}
q^1_i-q^0_i\mbox{ if }1\leq{i}\leq{3}\mbox{,}\\
q^2_{i-3}-q^0_{i-3}\mbox{ if }4\leq{i}\leq6\mbox{,}\\
q^0_{i-6}-P_{i-6}\mbox{ if }7\leq{i}\leq9\mbox{.}
\end{array}
\right.
$$

According to (\ref{DLP2.sec.2.eq.1}) and (\ref{DLP2.sec.2.eq.3}), the absolute value of $f(x)$ must be at least $1$ because $P$ does not belong to the affine hull of $Q$ and we obtain
$$
d(P,Q)\geq\frac{1}{\sqrt{g(x)}}\mbox{.}
$$

According to (\ref{DLP2.sec.2.eq.0.5}), $g(x)$ is a sum of three squares, each of the form
$$
(x_ix_{j+3}-x_jx_{i+3})^2
$$
where $i$ and $j$ are distinct and at most $3$. We will prove that any such square is at most $k^4$ and therefore that $g(x)$ is at most $3k^4$.

Since all the coordinates of $q^2-q^0$ are non-negative and at most $k$, so are $x_{i+3}$ and $x_{j+3}$. We consider different cases depending on the signs of $x_i$ and $x_j$. If $x_i$ and $x_j$ are both non-negative or both non-positive, then the products $x_ix_{j+3}$ and $x_jx_{i+3}$ cannot have opposite signs. Hence,
$$
(x_ix_{j+3}-x_jx_{i+3})^2\leq\max\{|x_ix_{j+3}|,|x_jx_{i+3}|\}^2
$$
and since $x$ belongs to the hypercube $[-k,k]^9$, it follows that the considered square is at most $k^4$. Now if $x_i$ is positive and $x_j$ is negative, then
\begin{equation}\label{DLP2.sec.3.lem.1.eq.1}
(x_ix_{j+3}-x_jx_{i+3})^2\leq(x_{j+3}-x_j)^2\max\{x_i,x_{i+3}\}^2\mbox{.}
\end{equation}

However, recall that $x_j$ is equal to $q^1_j-q^0_j$ and $x_{j+3}$ to $q^2_j-q^0_j$. As a consequence, $x_{j+3}-x_j$ is equal to $q^2_j-q^1_j$ and it follows that the absolute value of this difference is at most $k$. Hence, by (\ref{DLP2.sec.3.lem.1.eq.1}) the considered square is at most $k^4$ again. Finally if $x_i$ is negative and $x_j$ is positive then
$$
(x_ix_{j+3}-x_jx_{i+3})^2\leq(x_i-x_{i+3})^2\max\{x_j,x_{j+3}\}^2
$$
and the same argument (where $i$ and $j$ are exchanged) proves that the considered square is at most $k^4$ as well. We have therefore shown that
$$
d(P,Q)\geq\frac{1}{\sqrt{3}k^2}\mbox{.}
$$

By (\ref{DLP2.sec.3.eq.1}) and Proposition~\ref{DLP2.sec.3.prop.1}, this implies that the distance between $P$ and $Q$ is greater than $\varepsilon(3,k)$, which contradicts our assumption.
\end{proof}

There remains to treat the case when $P$ is contained in the affine hull of $Q$. This is a consequence of \cite[Theorem 5.1]{DezaOnnPokuttaPournin2024} that states that $\varepsilon(d,k)$ is a decreasing function of $d$ for every fixed $k$. In particular, $\varepsilon(2,k)$ is greater than $\varepsilon(3,k)$.

\begin{lem}\label{DLP2.sec.3.lem.2}
Consider a lattice point $P$ contained in the cube $[0,k]^3$ and a lattice $(3,k)$-triangle $Q$. If $P$ is contained in the affine hull of $Q$, then the distance between $P$ and $Q$ is greater than $\varepsilon(3,k)$.
\end{lem}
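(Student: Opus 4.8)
The plan is to reduce the situation to the two-dimensional kissing polytope problem. Since $P$ is contained in the affine hull of $Q$, which is a plane in $\mathbb{R}^3$, we can restrict our attention to that plane. The plan is to pick an affine isomorphism from $\mathrm{aff}(Q)$ onto $\mathbb{R}^2$ that sends the lattice $\mathbb{Z}^3 \cap \mathrm{aff}(Q)$ into $\mathbb{Z}^2$; more precisely, since $q^0$, $q^1$, $q^2$ are lattice points, the affine lattice $\mathrm{aff}(Q) \cap \mathbb{Z}^3$ is a rank-$2$ affine sublattice of $\mathbb{Z}^3$, and there is an affine map $\varphi$ carrying it bijectively onto $\mathbb{Z}^2$ (equivalently, onto a sublattice thereof). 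The key point is that such a map, restricted to a two-dimensional affine subspace, can be taken to be an isometry up to a global scaling factor only in special cases, so instead one argues more carefully.

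Here is the cleaner route I would actually take. Since $\varepsilon(2,k) > \varepsilon(3,k)$ by \cite[Theorem 5.1]{DezaOnnPokuttaPournin2024}, it suffices to show $d(P,Q) \geq \varepsilon(2,k)$. The distance $d(P,Q)$ is the same whether computed in $\mathbb{R}^3$ or inside the plane $H = \mathrm{aff}(Q)$, because both $P$ and $Q$ lie in $H$. Now $Q$ is a lattice triangle whose vertices lie in $[0,k]^3 \cap \mathbb{Z}^3$, and $P$ is a lattice point in the same set. The affine lattice $\Lambda = H \cap \mathbb{Z}^3$ contains all of $q^0, q^1, q^2, P$. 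After translating by $-q^0$ we may assume $0 \in \Lambda$, so $\Lambda$ is a genuine rank-$2$ sublattice of $\mathbb{Z}^3$ lying in a plane through the origin; let $v^1, v^2$ be a basis of $\Lambda$. Writing the four relevant points in this basis gives four points of $\mathbb{Z}^2$, but we need to control their coordinates: a priori the basis $v^1, v^2$ need not be short, so the $\mathbb{Z}^2$-coordinates of points of $\Lambda \cap [0,k]^3$ could be large. The fix is to choose $v^1, v^2$ to be a reduced (e.g. Minkowski-reduced) basis of $\Lambda$; then every point of $\Lambda$ lying in a ball of radius $\rho$ has coordinates bounded in terms of $\rho$ and the successive minima of $\Lambda$.

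The main obstacle, as this discussion makes clear, is keeping the two-dimensional configuration inside a cube $[0,k']^2$ with $k'$ not too large — ideally $k' \leq k$ — so that $d(P,Q) \geq \varepsilon(2,k') \geq \varepsilon(2,k) > \varepsilon(3,k)$ can be invoked directly; but if $k'$ might exceed $k$ one must instead use a lower bound on $\varepsilon(2,k')$ that is still larger than $\varepsilon(3,k)$, or rescale. A robust way to finish is to note that the edge vectors $q^1 - q^0$ and $q^2 - q^0$ each have norm at most $k\sqrt{3}$, and $P - q^0$ has norm at most $k\sqrt{3}$ as well, so after expressing everything in a reduced basis of $\Lambda$ the resulting lattice point/triangle pair in $\mathbb{Z}^2$ sits in a cube of side bounded by an absolute constant times $k$; combined with the explicit formula $\varepsilon(2,k') = 1/\sqrt{(k'-1)^2 + (k')^2}$ from Table \ref{DLP2.sec.1.tab.1}, one checks that this two-dimensional distance is at least $1/(c k)$ for a suitable constant $c$, and then a direct comparison with \eqref{DLP2.sec.3.eq.1} using Proposition \ref{DLP2.sec.3.prop.1}-type estimates shows $d(P,Q) > \varepsilon(3,k)$ for $k$ large enough. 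Alternatively — and this is likely what the authors do, given the phrasing of the paragraph preceding the lemma — one bypasses the reduced-basis bookkeeping entirely: since $P \in \mathrm{aff}(Q)$ but $P \notin Q$, the pair $(P, Q)$ is, up to an affine lattice-preserving change of coordinates of the plane $H$, a disjoint pair consisting of a lattice point and a lattice segment or triangle in $\mathbb{Z}^2$ contained in $[0,k]^2$ (the cube constraint being inherited because $H \cap \mathbb{Z}^3 \cap [0,k]^3$ maps into $[0,k]^2$ under a suitable choice of the isomorphism, or at worst into a translate of a cube of the same size), whence $d(P,Q) \geq \varepsilon(2,k)$; the result then follows immediately from $\varepsilon(2,k) > \varepsilon(3,k)$.
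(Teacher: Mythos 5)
Your high-level strategy (reduce to the plane $\mathrm{aff}(Q)$ and invoke $\varepsilon(2,k)>\varepsilon(3,k)$ from \cite[Theorem 5.1]{DezaOnnPokuttaPournin2024}) is the right one and is indeed what the paper does, but both of your proposed ways of finishing have a genuine gap at the same spot: you never produce a map from $\mathrm{aff}(Q)$ to $\mathbb{R}^2$ that simultaneously (a) sends lattice points of $[0,k]^3$ to lattice points of $[0,k]^2$, (b) is injective on $\mathrm{aff}(Q)$ so that disjointness is preserved, and (c) does not increase distances, so that a lower bound on the distance of the images transfers back to $d(P,Q)$. In your ``alternatively'' paragraph the inference ``up to an affine lattice-preserving change of coordinates \dots whence $d(P,Q)\geq\varepsilon(2,k)$'' is a non sequitur: a unimodular identification of $\mathrm{aff}(Q)\cap\mathbb{Z}^3$ with $\mathbb{Z}^2$ is in general not an isometry and can stretch or shrink distances arbitrarily (and the containment of the image in $[0,k]^2$ is also not automatic), so knowing the image pair is a disjoint pair of lattice $(2,k)$-polytopes tells you nothing directly about $d(P,Q)$ without property (c). Your reduced-basis route does confront the distortion issue, but it is left as ``one checks,'' produces only $d(P,Q)\geq 1/(ck)$ with unspecified constants, and therefore only yields the conclusion ``for $k$ large enough,'' whereas the lemma is stated and needed for every positive $k$.

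The paper's fix is short and avoids all the bookkeeping: take a normal vector $a$ of $\mathrm{aff}(Q)$, pick a coordinate in which $a$ is non-zero (say the third, after permuting coordinates), and let $\pi$ be the orthogonal projection forgetting that coordinate. Because $a_3\neq 0$, $\pi$ restricted to $\mathrm{aff}(Q)$ is a bijection onto $\mathbb{R}^2$, and it trivially sends lattice $(3,k)$-polytopes contained in $\mathrm{aff}(Q)$ to lattice $(2,k)$-polytopes, so $\pi(P)$ and $\pi(Q)$ are disjoint lattice $(2,k)$-polytopes and $d(\pi(P),\pi(Q))\geq\varepsilon(2,k)$; since an orthogonal projection never increases distances, $d(P,Q)\geq\varepsilon(2,k)>\varepsilon(3,k)$ for all $k$, with no constants and no reduced-basis argument. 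If you want to salvage your write-up, replace the generic lattice isomorphism by this coordinate projection and state explicitly the three properties (a)--(c) above; as written, the crucial distance comparison is missing.
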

\begin{proof}
Consider a non-zero normal vector $a$ to the affine hull of $Q$. This vector has at least one non-zero coordinate and, up to permuting the coordinates of $\mathbb{R}^3$, it can be assumed without loss of generality that this coordinate is the third one. Let us identify $\mathbb{R}^2$ with the plane spanned by the first two coordinates of $\mathbb{R}^3$. Since the third coordinate of $a$ is non-zero, the orthogonal projection $\pi:\mathrm{aff}(Q)\rightarrow\mathbb{R}^2$ is a bijection. In addition $\pi$ sends a lattice $(3,k)$-polytope contained in the affine hull of $Q$ to a lattice $(2,k)$-polytope. Hence, $\pi(P)$ and $\pi(Q)$ are two disjoint lattice $(2,k)$-polytopes and
\begin{equation}\label{DLP2.sec.3.lem.2.eq.2}
d\bigl(\pi(P),\pi(Q)\bigr)\geq\varepsilon(2,k)\mbox{.}
\end{equation}

As $\pi$ is an orthogonal projection the distance of two points in the affine hull of $Q$ cannot be less than the distance of their images by $\pi$ and by (\ref{DLP2.sec.3.lem.2.eq.2}),
$$
d(P,Q)\geq\varepsilon(2,k)\mbox{.}
$$

According to \cite[Theorem 5.1]{DezaOnnPokuttaPournin2024}, $\varepsilon(2,k)$ is greater than $\varepsilon(3,k)$, which proves that the distance between $P$ and $Q$ is greater than $\varepsilon(3,k)$.
\end{proof}

Observe that Theorem \ref{DLP2.sec.3.thm.0} is an immediate consequence of Lemmas \ref{DLP2.sec.3.lem.1} and \ref{DLP2.sec.3.lem.2}. It follows in particular from Theorem \ref{DLP2.sec.3.thm.0} that $\varepsilon(3,k)$ is necessarily achieved as the distance between two lattice $(3,k)$-segments.

\section{The case of two line segments}\label{DLP2.sec.4}

We have shown in Section \ref{DLP2.sec.2} that $\varepsilon(3,k)$ can only be achieved as the distance between two lattice $(3,k)$-segments. This allows us to refine Theorem \ref{DLP2.sec.2.thm.1} into the following statement where $\mathcal{Y}(k)$ denotes the subset of the lattice points $x$ in the hypercube $[-k,k]^9$ such that $x_1$ is non-positive, $x_2$ to $x_6$ are non-negative, and for every integer $i$ satisfying $1\leq{i}\leq3$,
\begin{equation}\label{DLP2.sec.3.eq.2}
\left\{
\begin{array}{l}
|x_i-x_{i+6}|\leq{k}\mbox{,}\\
|x_{i+3}+x_{i+6}|\leq{k}\mbox{,}\\
|x_i-x_{i+3}-x_{i+6}|\leq{k}\mbox{.}\\
\end{array}
\right.
\end{equation}

As will be apparent from the proof of that statement, all the points $x$ in $\mathcal{X}(k)$ that correspond to a pair $P$ and $Q$ of lattice $(3,k)$-segments via the construction of Section \ref{DLP2.sec.2} satisfy (\ref{DLP2.sec.3.eq.2}). It can be proven that the converse is also true but we will not make use of that property in the sequel.

\begin{thm}\label{DLP2.sec.3.thm.1}
For every positive integer $k$, there exists a lattice point $x$ in $\mathcal{X}(k)\cap\mathcal{Y}(k)$ such that $f(x)$ is non-zero, $g(x)$ is positive, and
\begin{equation}\label{DLP2.sec.4.thm.1.eq.1}
\varepsilon(3,k)\geq\frac{|f(x)|}{\sqrt{g(x)}}\mbox{.}
\end{equation}
\end{thm}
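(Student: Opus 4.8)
The plan is to start from Theorem~\ref{DLP2.sec.2.thm.1}, which already gives a lattice point $x\in\mathcal{X}(k)$ with $f(x)\neq0$, $g(x)>0$, and $\varepsilon(3,k)\geq|f(x)|/\sqrt{g(x)}$. By Theorem~\ref{DLP2.sec.3.thm.0} (for $k$ large; the small cases are handled by the values tabulated in \cite{DezaLiuPournin2024}), such an $x$ can be taken to arise from a pair of disjoint lattice $(3,k)$-segments $P$ and $Q$. So the entire content of the statement is: one can \emph{additionally} arrange that $x$ lies in $\mathcal{Y}(k)$, i.e.\ that $x_1\leq0$, that $x_2,\dots,x_6\geq0$, and that the three families of inequalities~(\ref{DLP2.sec.3.eq.2}) hold. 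The first move is therefore to write $P$ and $Q$ as segments with endpoints $p^0,p^1$ and $q^0,q^1$ in $[0,k]^3$, set the columns of $A$ to be $p^1-p^0$ and $q^1-q^0$ and $b=q^0-p^0$, so that $x_i=p^1_i-p^0_i$, $x_{i+3}=q^1_i-q^0_i$, $x_{i+6}=q^0_i-p^0_i$ for $1\leq i\leq3$.

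Next I would use Remark~\ref{DLP2.sec.2.rem.1}: exchanging $P$ and $Q$, relabeling the two endpoints of either segment, applying a symmetry of the cube, or negating columns of $A$ all leave $|f(x)|/\sqrt{g(x)}$ (and membership in $\mathcal{X}(k)$) unchanged. The sign conditions $x_2,\dots,x_6\geq0$ say exactly that all coordinates of $q^1-q^0$ and of $p^1_i-p^0_i$ for $i\geq2$ are non-negative: relabeling the endpoints of each segment negates a whole column of $A$, and a coordinate permutation composed with reflections of the cube (which negate chosen rows of $A$ while doing the same to $b$) lets me flip the sign of any row; together these normalize the signs of the first and second columns so that $x_2,\dots,x_6\geq0$ and $x_1\leq0$. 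The point is that we have enough symmetry operations — one sign choice per row from cube reflections, one per column from endpoint relabeling — to force the desired sign pattern; I would spell out that a column-negation fixes the sign of $x_4,x_5,x_6$, a second column-negation together with a possible single row-reflection fixes the signs $x_1\leq0\leq x_2,x_3$ (here one must check the degenerate case where some $x_i=0$, in which the inequality is satisfied for free and no choice is forced).

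Finally, the inequalities~(\ref{DLP2.sec.3.eq.2}) are not normalization choices but genuine consequences of $P,Q\subseteq[0,k]^3$: with the identifications above, $x_i-x_{i+6}=p^1_i-q^0_i$, $x_{i+3}+x_{i+6}=q^1_i-p^0_i$, and $x_i-x_{i+3}-x_{i+6}=p^1_i-q^1_i$, and each of these is a difference of two coordinates lying in $[0,k]$, hence has absolute value at most $k$. So~(\ref{DLP2.sec.3.eq.2}) holds automatically. Assembling the pieces: the symmetry operations bring $x$ into the sign pattern defining part of $\mathcal{Y}(k)$ while preserving $x\in\mathcal{X}(k)$, $f(x)\neq0$, $g(x)>0$ and the value $|f(x)|/\sqrt{g(x)}\leq\varepsilon(3,k)$; the cube-containment of $P$ and $Q$ gives the remaining inequalities; hence $x\in\mathcal{X}(k)\cap\mathcal{Y}(k)$ and~(\ref{DLP2.sec.4.thm.1.eq.1}) follows. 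The only step requiring care is the bookkeeping in the previous paragraph: verifying that the available symmetries really can realize \emph{every} sign in the target pattern simultaneously, and that nothing in that process disturbs the inequalities of the last paragraph — but since row reflections act on $b$ in the same way as on the rows of $A$, and endpoint relabeling of $Q$ simultaneously flips the sign of $q^1-q^0$ and leaves $q^0-p^0$ alone while swapping the roles of $q^0$ and $q^1$, one checks that~(\ref{DLP2.sec.3.eq.2}) is stable under all of them, which closes the argument.
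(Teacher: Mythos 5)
There is a genuine gap in your treatment of the sign condition $x_1\leq 0$. You claim that the available symmetries (cube reflections acting on rows, endpoint relabelings acting as column negations, coordinate permutations, exchanging $P$ and $Q$) suffice to force the full sign pattern $x_1\leq 0\leq x_2,\dots,x_6$. They do not: the quantities $\operatorname{sign}\bigl((p^1_i-p^0_i)(q^1_i-q^0_i)\bigr)$ are, as a multiset, invariant under all of these operations up to a single global flip (a row reflection flips both factors, an endpoint relabeling flips all three products at once, a permutation only permutes them). So if, for instance, every coordinate of both $p^1-p^0$ and $q^1-q^0$ is strictly positive, no combination of these operations can produce a point with $x_1<0\leq x_4$ and $x_2,x_3,x_5,x_6\geq 0$, since the target pattern has products of mixed sign. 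The paper only uses symmetries to arrange $x_2,\dots,x_6\geq 0$ (reflections make all coordinates of $q^1-q^0$ non-negative; swapping $p^0$ and $p^1$ gives $p^1-p^0$ at least two non-negative coordinates, and a permutation sorts them so the last two are non-negative). The inequality $x_1\leq 0$ is then \emph{not} a normalization but a consequence of optimality: if $x_1$ were non-negative, all six of $x_1,\dots,x_6$ would be non-negative, each of the three squares in~(\ref{DLP2.sec.2.eq.0.5}) would be the square of a difference of two non-negative products and hence at most $k^4$, so $g(x)\leq 3k^4$; since $|f(x)|\geq 1$, inequality~(\ref{DLP2.sec.4.thm.1.eq.1}) would give $\varepsilon(3,k)\geq 1/(\sqrt{3}k^2)$, contradicting the upper bound~(\ref{DLP2.sec.3.eq.1}) via Proposition~\ref{DLP2.sec.3.prop.1}. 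This contradiction argument is the missing ingredient in your proposal; without it the step ``a second column-negation together with a possible single row-reflection fixes the signs $x_1\leq0\leq x_2,x_3$'' simply fails, because any row reflection also disturbs the already-normalized second column.

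The rest of your outline matches the paper: starting from an optimal pair with disjoint affine hulls (the paper invokes \cite[Theorem 5.2]{DezaOnnPokuttaPournin2024} directly rather than Theorem~\ref{DLP2.sec.2.thm.1}), using Theorem~\ref{DLP2.sec.3.thm.0} to reduce to two segments, getting $f(x)\neq0$, $g(x)>0$ and~(\ref{DLP2.sec.4.thm.1.eq.1}) from \cite[Lemma 2]{DezaLiuPournin2024} and \cite[Proposition 4]{DezaLiuPournin2024}, and deriving~(\ref{DLP2.sec.3.eq.2}) from the identities $x_i-x_{i+6}=p^1_i-q^0_i$, $x_{i+3}+x_{i+6}=q^1_i-p^0_i$, $x_i-x_{i+3}-x_{i+6}=p^1_i-q^1_i$ together with containment in $[0,k]^3$ is exactly what the paper does. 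Also note a small slip: relabeling the endpoints of $Q$ does not leave $b=q^0-p^0$ alone, it replaces it by $q^1-p^0$; this does not affect the value of $|f(x)|/\sqrt{g(x)}$, but your bookkeeping claim as written is inaccurate.
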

\begin{proof}
According to \cite[Theorem 5.2]{DezaOnnPokuttaPournin2024}, there exists two lattice $(3,k)$-polytopes $P$ and $Q$ whose dimensions sum to $2$, whose affine hulls are disjoint, and whose distance is equal to $\varepsilon(3,k)$. It follows from Theorem \ref{DLP2.sec.3.thm.0} that both $P$ and $Q$ are line segments. Denote by $p^0$ and $p^1$ the vertices of $P$ and by $q^0$ and $q^1$ those of $Q$. Up to the symmetries of the cube $[0,k]^3$, we can assume that the coordinates of $q^1-q^0$ are all non-negative: if $q^1_i-q^0_i$ is negative, it suffices to replace $P$ and $Q$ by their symmetric with respect to the plane
$$
\biggl\{x\in\mathbb{R}^2:x_i=\frac{k}{2}\biggr\}\mbox{.}
$$

We can also assume without loss of generality that $p^1-p^0$ has at least two non-negative coordinates by exchanging $p^0$ and $p^1$ if needed and that
$$
p^1_1-p^0_1\leq{p^1_2-p^0_2}\leq{p^1_3-p^0_3}
$$
by permuting the coordinates of $\mathbb{R}^3$ appropriately. Note that any such permutation does not disturb the non-negativity of the coordinates of $q^1-q^0$.

Consider the matrix $A$ whose first column is $p^1-p^0$ and whose second column is $q^1-q^0$ and denote by $b$ the vector $q^0-p^0$. Further denote by $x$ the lattice point in $\mathcal{X}(k)$ obtained from $A$ and $b$ via (\ref{DLP2.sec.2.eq.2}) and (\ref{DLP2.sec.2.eq.2.1}). As the distance of $P$ and $Q$ is $\varepsilon(3,k)$ and their affine hulls are disjoint, it follows from \cite[Lemma 2]{DezaLiuPournin2024} and \cite[Proposition 4]{DezaLiuPournin2024} that $A^tA$ is non-singular. Since $g(x)$ is the determinant of $A^tA$, it must be non-zero. According to (\ref{DLP2.sec.2.eq.1}) and (\ref{DLP2.sec.2.eq.3}), $f(x)$ is non-zero as well because the affine hulls of $P$ and $Q$ are disjoint. Moreover,
\begin{equation}\label{DLP2.sec.3.thm.1.eq.1}
\varepsilon(3,k)\geq\frac{|f(x)|}{\sqrt{g(x)}}\mbox{.}
\end{equation}

Recall that the last two coordinates of $p^1-p^0$ and all the coordinates of $q^1-q^0$ are non-negative or, equivalently, that $x_2$ to $x_6$ are non-negative. We can show using an argument similar to that in the proof of Lemma \ref{DLP2.sec.3.lem.1} that $x_1$ must be negative. Indeed, assume for contradiction that $x_1$ is non-negative. In that case, the right-hand side of (\ref{DLP2.sec.2.eq.0.5}) is a sum of three squares, each of a difference of two non-negative terms. As a consequence,
$$
g(x)\leq\max\{x_1x_5,x_2x_4\}^2+\max\{x_1x_6,x_2x_4\}^2+\max\{x_2x_6,x_3x_5\}^2\mbox{.}
$$

However, as $x$ belongs to $[-k,k]^9$, it follows that $g(x)$ is at most $3k^4$ and since $f(x)$ is a non-zero integer, this and (\ref{DLP2.sec.3.thm.1.eq.1}) imply that
$\varepsilon(3,k)$ is at least $1/(\sqrt{3}k^2)$. Together with (\ref{DLP2.sec.3.eq.1}) and Proposition \ref{DLP2.sec.3.prop.1}, this results in a contradiction.

There remains to show that the point $x$ satisfies (\ref{DLP2.sec.3.eq.2}) for every integer $i$ such that $1\leq{i}\leq3$. Consider such an integer $i$ and recall that
$$
\left\{
\begin{array}{l}
x_i=p^1_i-p^0_i\mbox{,}\\
x_{i+3}=q^1_i-q^0_i\mbox{,}\\
x_{i+6}=q^0_i-p^0_i\mbox{.}\\
\end{array}
\right.
$$

As an immediate consequence,
$$
\left\{
\begin{array}{l}
x_i-x_{i+6}=p^1_i-q^0_i\mbox{,}\\
x_{i+3}+x_{i+6}=q^1_i-p^0_i\mbox{,}\\
x_i-x_{i+3}-x_{i+6}=p^1_i-q^1_i\mbox{,}\\
\end{array}
\right.
$$
and since $p^0$, $p^1$, $q^0$, and $q^1$ belong to $[0,k]^3$, this shows that $x$ satisfies (\ref{DLP2.sec.3.eq.2}).
\end{proof}

Now denote by $\mathcal{Z}(k)$ the set of the points $x$ in the hypercube $[-k,k]^9$ (but not necessarily in $\mathbb{Z}^9$) such that $x_1$ is non-positive, $x_2$ to $x_6$ are non-negative, and for every integer $i$ satisfying $1\leq{i}\leq3$, the absolute values of $x_i$ and $x_{i+3}$ cannot both be equal to $k$. We can prove the following property for all the points contained in $\mathcal{Y}(k)$ provided that $k$ is large enough.

\begin{thm}\label{DLP2.sec.4.thm.1}
Consider a lattice point $x$ in $\mathcal{Y}(k)$ such that $f(x)$ is non-zero, $g(x)$ is positive. If $k$ is at least $5$ and $x$ satisfies (\ref{DLP2.sec.4.thm.1.eq.1}), then the absolute value of $f(x)$ is equal to $1$ and $x$ belongs to $\mathcal{Z}(k)$.
\end{thm}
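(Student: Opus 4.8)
The plan is to argue by contradiction in two stages, mirroring the structure already used in Lemma~\ref{DLP2.sec.3.lem.1} and in the proof of Theorem~\ref{DLP2.sec.3.thm.1}. First I would suppose that $x$ does \emph{not} lie in $\mathcal{Z}(k)$, i.e.\ that there is an index $i\in\{1,2,3\}$ for which both $|x_i|$ and $|x_{i+3}|$ equal $k$. Since $x_2,\dots,x_6$ are non-negative and $x_1$ is non-positive, this means $x_{i+3}=k$ and either $x_i=k$ (if $i\in\{2,3\}$) or $x_1=-k$. In each such case I would extract strong consequences from the constraints~(\ref{DLP2.sec.3.eq.2}): for instance $|x_{i+3}+x_{i+6}|\le k$ forces $x_{i+6}\le 0$, and combined with $|x_i-x_{i+6}|\le k$ and $|x_i-x_{i+3}-x_{i+6}|\le k$ this pins $x_{i+6}$ down to a very small range (essentially $x_{i+6}=0$ or a short interval near it). The upshot is that saturating one of the coordinate pairs at $(\pm k,k)$ simultaneously forces the corresponding entry of $b$, and often forces neighbouring coordinates as well, into a degenerate configuration. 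I would then feed this into the same kind of sum-of-squares estimate on $g(x)$ that appears in Lemma~\ref{DLP2.sec.3.lem.1}: bounding each term $(x_ax_{b+3}-x_bx_{a+3})^2$ by $k^4$ (using the sign pattern of $x$ and the difference bounds from~(\ref{DLP2.sec.3.eq.2}) exactly as before), one gets $g(x)\le 3k^4$, hence $|f(x)|/\sqrt{g(x)}\ge 1/(\sqrt3 k^2)$ since $f(x)$ is a non-zero integer, and this contradicts~(\ref{DLP2.sec.4.thm.1.eq.1}) together with~(\ref{DLP2.sec.3.eq.1}) and Proposition~\ref{DLP2.sec.3.prop.1}. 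Actually I expect the clean statement is simply: whenever $x\in\mathcal{Y}(k)$ has $f(x)\ne 0$, $g(x)>0$, and satisfies~(\ref{DLP2.sec.4.thm.1.eq.1}), the same $3k^4$ bound on $g(x)$ already shows $|f(x)|$ cannot exceed $1$, so $|f(x)|=1$; and one shows separately that $x\notin\mathcal{Z}(k)$ would make $g(x)$ strictly smaller than $2(2k^2-4k+5)(2k^2-2k+1)$, which combined with $|f(x)|=1$ again violates~(\ref{DLP2.sec.4.thm.1.eq.1}).

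Concretely, the first half — that $|f(x)|=1$ — should follow almost verbatim from the estimate in the proof of Theorem~\ref{DLP2.sec.3.thm.1}: since $x_1\le 0$ and $x_2,\dots,x_6\ge 0$, each of the three squares in $g(x)$ is the square of a difference of two products each of absolute value at most $k^2$, but using the difference-bound constraints~(\ref{DLP2.sec.3.eq.2}) (which are exactly what was exploited for the $x_i$-negative case in Lemma~\ref{DLP2.sec.3.lem.1}) each square is in fact at most $k^4$, so $g(x)\le 3k^4$. If $|f(x)|\ge 2$ then $|f(x)|/\sqrt{g(x)}\ge 2/(\sqrt3 k^2)$, which for $k\ge 5$ exceeds the right-hand side of~(\ref{DLP2.sec.3.eq.1}) by Proposition~\ref{DLP2.sec.3.prop.1}-type estimates (one checks $2/(\sqrt3 k^2)>1/\sqrt{2(2k^2-4k+5)(2k^2-2k+1)}$ for all $k\ge1$, which is even easier than the cited proposition), contradicting~(\ref{DLP2.sec.4.thm.1.eq.1}). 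Hence $|f(x)|=1$.

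For the second half — that $x\in\mathcal{Z}(k)$ — I would show that if some pair $(x_i,x_{i+3})$ with $i\in\{1,2,3\}$ has $|x_i|=|x_{i+3}|=k$, then $g(x)$ is bounded by something strictly below $2(2k^2-4k+5)(2k^2-2k+1)$ for $k\ge 5$, so that with $|f(x)|=1$ inequality~(\ref{DLP2.sec.4.thm.1.eq.1}) fails. The point is that $2(2k^2-4k+5)(2k^2-2k+1)=8k^4-24k^3+\cdots$ is genuinely smaller than $3k^4$ for moderate $k$, so there is slack: a saturated coordinate pair, via the constraints~(\ref{DLP2.sec.3.eq.2}), forces the associated $b$-coordinate $x_{i+6}$ and the cross terms to be small enough that at least one of the three squares in $g(x)$ drops well below $k^4$, or more than one drops below — enough to push the total under $8k^4-24k^3+40k^2-28k+10$. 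I would handle the three index choices (and the sign sub-cases for $i=1$) separately, each time: use~(\ref{DLP2.sec.3.eq.2}) to localize $x_{i+6}$, substitute into the two squares of $g(x)$ that involve index $i$, maximize crudely over the remaining free coordinates in $[-k,k]$ respecting their sign constraints, and compare the resulting polynomial in $k$ against $2(2k^2-4k+5)(2k^2-2k+1)$ — a one-variable polynomial inequality valid for $k\ge 5$, verifiable by checking the root locations just as Proposition~\ref{DLP2.sec.3.prop.1} was.

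The main obstacle I anticipate is the second half: the case analysis on which coordinate pair is saturated, and the need to get a bound on $g(x)$ that beats the fairly tight target $2(2k^2-4k+5)(2k^2-2k+1)$ rather than merely $3k^4$. The crude ``each square $\le k^4$'' bound is not enough here, so for each saturation case one must carefully track how~(\ref{DLP2.sec.3.eq.2}) shrinks the feasible box for the entries of $b$ and for the partner coordinates, and extract a quantitatively better bound on at least one square. Getting these per-case polynomial inequalities to hold down to $k=5$ (rather than only for large $k$) is where the real work lies, and it is plausibly where symbolic computation enters, exactly as foreshadowed in the introduction; the remaining steps are direct adaptations of arguments already in Section~\ref{DLP2.sec.3}.
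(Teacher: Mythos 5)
Your first half is essentially the paper's argument and its conclusion is correct, but the intermediate bound you import from Lemma~\ref{DLP2.sec.3.lem.1} is not: on $\mathcal{Y}(k)$ one cannot show that each square in (\ref{DLP2.sec.2.eq.0.5}) is at most $k^4$. In Lemma~\ref{DLP2.sec.3.lem.1} the estimate used that both columns of $A$ were edge vectors of the \emph{same} triangle, so that $x_{j+3}-x_j$ had absolute value at most $k$; for two segments this fails, and indeed the point with $x_1=-k$, $x_2=x_4=x_5=k$, $x_3=x_6=0$, $x_7=-k$, $x_8=x_9=0$ lies in $\mathcal{Y}(k)$ and has $(x_1x_5-x_2x_4)^2=4k^4$. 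This is harmless here: the crude bound $g(x)\leq 12k^4$ together with $|f(x)|\geq 2$ still gives $|f(x)|/\sqrt{g(x)}\geq 1/(\sqrt{3}k^2)$, and Proposition~\ref{DLP2.sec.3.prop.1} with (\ref{DLP2.sec.3.eq.1}) yields the contradiction, which is exactly how the paper proves $|f(x)|=1$.

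The genuine gap is in your second half. Your key claim is that a saturated pair $|x_i|=|x_{i+3}|=k$ by itself forces $g(x)<8k^4-24k^3+40k^2-28k+10$, and that is false: the point $x=(-k,k,k,k,k,k,-k,0,0)$ belongs to $\mathcal{Y}(k)$, has all three pairs saturated, and satisfies $g(x)=8k^4$, which exceeds your target for every $k\geq1$. Localizing $x_{i+6}$ via (\ref{DLP2.sec.3.eq.2}) (it equals $-k$ when $i=1$ and $0$ when $i\in\{2,3\}$) does not shrink the two squares of $g$ involving index $i$: with $x_1=-k$ and $x_4=k$ they equal $k^2(x_2+x_5)^2$ and $k^2(x_3+x_6)^2$, whose crude maximum over the sign-constrained box is already $8k^4$. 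So no purely metric estimate of the kind you outline can close this case; it would have to use $|f(x)|=1$ jointly, and the way to do that is arithmetic, not geometric. The paper's proof uses precisely the localization you found, but to conclude that $x_i$, $x_{i+3}$, and $x_{i+6}$ are all integer multiples of $k$; it then rewrites $f$ as in (\ref{DLP2.sec.2.eq.0}) by grouping the terms whose coefficients are $x_i$, $x_{i+3}$, and $x_{i+6}$, which shows that $k$ divides $f(x)$. Since $k\geq5$ this contradicts $|f(x)|=1$, giving $x\in\mathcal{Z}(k)$. This divisibility step is the idea missing from your proposal.
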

\begin{proof}
Assume that $k$ is at least $5$. We begin by showing that the absolute value of $f(x)$ is equal to $1$. Assume for contradiction that this is not the case. As $f(x)$ is a non-zero integer, its absolute value must then be at least $2$. However, the absolute values of the coordinates of $x$ are all at most~$k$ and it follows from~(\ref{DLP2.sec.2.eq.0.5}) that $g(x)$ is at most $12k^4$. Therefore,~(\ref{DLP2.sec.4.thm.1.eq.1}) implies
$$
\frac{|f(x)|}{\sqrt{g(x)}}\geq\frac{1}{\sqrt{3}k^2}
$$
which together with (\ref{DLP2.sec.4.thm.1.eq.1}) and Proposition \ref{DLP2.sec.3.prop.1} contradicts (\ref{DLP2.sec.3.eq.1}).

Now consider an integer $i$ such that $1\leq{i}\leq3$. We will show that $|x_i|$ and $|x_{i+3}|$ cannot both be equal to $k$. Assume, again for contradiction, that these two absolute values are equal to $k$ and let us first show that $x_{i+6}$ must be an integer multiple of $k$ as well. Since $x$ belongs to $\mathcal{Y}(k)$ its first coordinate is non-positive and the next five are non-negative. Hence, if $i$ is equal to $1$, then $x_i$ is equal to $-k$ and $x_{i+3}$ to $k$. Therefore, according to the third inequality from (\ref{DLP2.sec.3.eq.2}), $2k+x_{i+6}$ is at most $k$ and it follows that $x_{i+6}$ is necessarily equal to $-k$. If however, $i$ is equal to $2$ or $3$, then $x_i$ and $x_{i+3}$ are both equal to $k$. In that case, by the first inequality from~(\ref{DLP2.sec.3.eq.2}), $x_{i+6}$ cannot be negative and according to the second inequality, it cannot be positive, which shows that it must be equal to $0$. As a consequence, $x_i$, $x_{i+3}$, and $x_{i+6}$ all are integer multiples of $k$. However observe that~(\ref{DLP2.sec.2.eq.0}) can be rewritten the three following ways:
$$
\left\{
\begin{array}{l}
f(x)=x_1(x_6x_8-x_5x_9)+x_4(x_2x_9-x_3x_8)+x_7(x_3x_5-x_2x_6)\mbox{,}\\
f(x)=x_2(x_4x_9-x_6x_7)+x_5(x_3x_7-x_1x_9)+x_8(x_1x_6-x_3x_4)\mbox{,}\\
f(x)=x_3(x_5x_7-x_4x_8)+x_6(x_1x_8-x_2x_7)+x_9(x_2x_4-x_1x_5)\mbox{.}\\
\end{array}
\right.
$$

Since $x_i$, $x_{i+3}$, and $x_{i+6}$ are integer multiples of $k$, the $i$th equality among these three shows that $f(x)$ is also an integer multiple of $k$. Since $f(x)$ is not equal to zero and $k$ is not equal to $1$, this implies that the absolute value of $f(x)$ cannot be equal to $1$ and we reach a contradiction. As a consequence, the absolute values of $x_i$ and $x_{i+3}$ cannot both be equal to $k$.
\end{proof}

According to Theorem \ref{DLP2.sec.4.thm.1}, when $k$ is not too small it suffices to consider the points $x$ in $\mathcal{Y}(k)$ such that the absolute value of $f(x)$ is equal to $1$  in order to lower bound $\varepsilon(3,k)$, which given the right-hand side of (\ref{DLP2.sec.4.thm.1.eq.1}) amounts to maximize $g(x)$. Using this, we will further restrict the search space to a set of lattice points that does not depend on $k$. In particular, we will prove that the search for a lower bound on $\varepsilon(3,k)$ via Theorem \ref{DLP2.sec.3.thm.1} can be restricted to the points $x$ in $\mathcal{Y}(k)$ such that $h(x)$ is at least $6k-5$ where
$$
h(x)=-x_1+x_2+x_3+x_4+x_5+x_6\mbox{.}
$$

Consider the set $\mathcal{B}$ of the points $x$ in $\mathbb{N}^9$ whose first six coordinates sum to~$6$, whose last three coordinates are equal to zero, while $x_i+x_{i+3}$ is at least $1$ when $i$ satisfies $1\leq{i}\leq3$. Observe that $\mathcal{B}$ is a finite set. When $k$ at least $6$, we can embed $\mathcal{B}$ as a subset of $\mathcal{Z}(k)\cap\mathbb{Z}^9$ by using the affine map $\phi_k:\mathbb{R}^9\rightarrow\mathbb{R}^9$ such that the $i$th coordinate of $\phi_k(x)$ is given by
$$
\bigl[\phi_k(x)\bigr]_i=\left\{
\begin{array}{l}
-k+x_i\mbox{ if }i\mbox{ is equal to }1\mbox{ or }7\mbox{,}\\
k-x_i\mbox{ if }i\mbox{ is at least }2\mbox{ and at most }6\mbox{,}\\
x_i\mbox{ if }i\mbox{ is equal to }8\mbox{ or }9\mbox{.}
\end{array}
\right.
$$

Note in particular that $h(x)$ is equal to $6k-6$ for every point $x$ in $\phi_k(\mathcal{B})$.

\begin{thm}\label{DLP2.sec.4.thm.2}
Consider a lattice point $z$ in $\mathcal{Z}(k)$ such that $h(z)$ is at most $6k-6$. If $k$ is at least $6$, then there exists a point $x$ in $\mathcal{B}$ such that
$$
g\circ\phi_k(x)\geq{g(z)}\mbox{.}
$$
\end{thm}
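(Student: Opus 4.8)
The plan is to change coordinates so that $\mathcal{B}$ appears directly, by applying $\phi_k^{-1}$, and then to run a minimal-counterexample argument that repeatedly lowers the coordinate sum without lowering the value of $g\circ\phi_k$, using that this value is in fact monotone in two of the six relevant variables. So given a lattice point $z$ in $\mathcal{Z}(k)$ with $h(z)\leq 6k-6$, I would set $v=\phi_k^{-1}(z)$; explicitly $v_i=z_i+k$ for $i\in\{1,7\}$, $v_i=k-z_i$ for $2\leq i\leq 6$, $v_i=z_i$ for $i\in\{8,9\}$, and $z=\phi_k(v)$. Since $z_1\leq 0$ and $z_2,\dots,z_6\geq 0$, the coordinates $v_1,\dots,v_6$ are integers in $\{0,\dots,k\}$; the condition of $\mathcal{Z}(k)$ that $|z_i|$ and $|z_{i+3}|$ are not both equal to $k$ becomes $v_i+v_{i+3}\geq 1$ for $i=1,2,3$; and since $h(z)=6k-(v_1+\dots+v_6)$, the hypothesis $h(z)\leq 6k-6$ becomes $v_1+\dots+v_6\geq 6$. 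As $g$ depends only on the first six coordinates, and $\phi_k$ acts there coordinatewise, $g(z)$ depends only on $v_1,\dots,v_6$; writing $w_i=k-v_i\geq 0$,
\[
g(z)=(w_1w_5+w_2w_4)^2+(w_1w_6+w_3w_4)^2+(w_2w_6-w_3w_5)^2=:F(v)\mbox{.}
\]
A point of $\mathcal{B}$ is, up to three trailing zeros, an integer point $v$ of $\{0,\dots,k\}^6$ with $v_i+v_{i+3}\geq 1$ for $i=1,2,3$ and $v_1+\dots+v_6=6$, and $g\circ\phi_k$ of it equals $F(v)$. So it suffices to prove: for every integer point $v$ of $\{0,\dots,k\}^6$ with $v_i+v_{i+3}\geq 1$ for $i=1,2,3$ and $v_1+\dots+v_6\geq 6$ --- call such points \emph{admissible} --- there is an admissible point with coordinate sum exactly $6$ and value of $F$ at least $F(v)$.

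Next I would record two features of $F$. It is non-increasing in $v_1$ and in $v_4$: indeed $w_1=k-v_1$ occurs in the displayed formula only through $w_1w_5$ and $w_1w_6$, with non-negative cofactors, so $F$ is non-decreasing in $w_1$, hence non-increasing in $v_1$; symmetrically for $v_4$ through $w_2w_4$ and $w_3w_4$. Also $F$ is invariant under the permutation $(v_1,v_2,v_3,v_4,v_5,v_6)\mapsto(v_4,v_5,v_6,v_1,v_2,v_3)$ and under $(v_1,v_2,v_3,v_4,v_5,v_6)\mapsto(v_1,v_3,v_2,v_4,v_6,v_5)$, both of which preserve admissibility, the coordinate sum, and $\mathcal{B}$. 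Now let $M$ be the maximum of $g\circ\phi_k$ over the finite set $\mathcal{B}$. If $g(z)\leq M$ there is nothing to prove, so suppose some admissible $v$ has $F(v)>M$, and take one minimizing $v_1+\dots+v_6$. If that sum is $6$ then $v$, padded with zeros, lies in $\mathcal{B}$ and gives $M\geq F(v)>M$, a contradiction; so the sum is at least $7$. Using the two permutations I may assume $v_1\leq v_4$ and $v_2+v_5\geq v_3+v_6$. If $v_1+v_4\geq 2$ then $v_4\geq 1$, and decreasing $v_4$ by $1$ keeps $v$ admissible (the pair sum $v_1+v_4$ stays at least $1$, the total stays at least $6$) while not decreasing $F$, contradicting minimality.

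The remaining case $v_1+v_4=1$ forces $v_1=0$ and $v_4=1$, so $v_2+v_3+v_5+v_6\geq 6$ and hence $v_2+v_5\geq 3$. Here I would decrease $v_5$ by $1$ if $v_5\geq 1$, and otherwise (so that $v_2=v_2+v_5\geq 3$) decrease $v_2$ by $1$; either way $v$ stays admissible, so the only point to check is that $F$ does not decrease. At a point with $v_1=0$ and $v_4=1$ (thus $w_1=k$ and $w_4=k-1$) a short computation gives
\[
\partial F/\partial v_5=-2w_5(k^2+w_3^2)-2w_2\bigl(k(k-1)-w_3w_6\bigr)\mbox{,}
\]
and, when moreover $v_5=0$ (thus $w_5=k$),
\[
\partial F/\partial v_2=-2k\bigl(k(k-1)-w_3w_6\bigr)-2w_2\bigl((k-1)^2+w_6^2\bigr)\mbox{.}
\]
Each of these is at most $0$ as soon as $w_3w_6\leq k(k-1)$, that is $(k-v_3)(k-v_6)\leq k(k-1)$, and that bound holds for all integers $v_3,v_6$ in $\{0,\dots,k\}$ that are not both $0$ --- a condition never disturbed here, since the pair condition $v_3+v_6\geq 1$ is left untouched. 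Hence $F$ does not decrease, contradicting minimality once more, and the proof is complete.

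I expect the only real difficulty to be this final case $v_1+v_4=1$: there the two monotone directions $v_1,v_4$ are used up, $F$ is genuinely non-monotone in the remaining four variables, and one is forced back onto the explicit sign of the partial derivatives --- which comes out right only because the pair conditions $v_3+v_6\geq 1$ and $v_2+v_5\geq 1$ force $(k-v_3)(k-v_6)\leq k(k-1)$ and $(k-v_2)(k-v_5)\leq k(k-1)$. Recognising that this elementary inequality is exactly what controls the sign is the crux; the rest is bookkeeping with the two symmetries and with the monotonicity in $v_1$ and $v_4$.
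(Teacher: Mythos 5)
Your proof is correct and follows essentially the same route as the paper: a descent on the coordinate sum (equivalently an induction increasing $h$), using that $g$ is monotone in the first and fourth coordinates, and, in the extremal case $v_1+v_4=1$ (the paper's $-z_1+z_4=2k-1$), the sign of the relevant partial derivative, which is controlled by exactly the same pair-constraint bound $w_3w_6\leq k(k-1)$ that the paper uses. The only differences are presentational: you work in the $\phi_k^{-1}$-coordinates and use the two symmetries of $F$ to reduce the paper's four boundary subcases (moves on $z_2,z_3,z_5,z_6$) to two.
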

\begin{proof}
Assume that $k$ is at least $6$. First observe that $g(z)$ and $h(z)$ do not depend on the last three coordinates of $z$ and we can therefore assume without loss of generality that the seventh coordinate of $z$ is $-k$ and that its two last coordinates are both equal to $k$. The proof is by induction on $h(z)$. By construction, $\phi_k$ sends $\mathcal{B}$ to the subset of the points in $\mathcal{Z}(k)\cap\mathbb{Z}^9$ whose image by $h$ is $6k-6$, whose seventh coordinate is $-k$ and whose last two coordinates are equal to $k$. Hence,  if $h(z)$ is equal to $6k-6$, then $z$ belongs to $\phi_k(\mathcal{B})$ and it suffices to take for $x$ the pre-image of $z$ by $\phi_k$.

Now assume that $h(z)$ is at most $6k-7$. Since $z$ is a lattice point in $\mathcal{Z}(k)$, $-z_1+z_4$ is at most $2k-1$. However, for every point $x$ in $\mathcal{Z}(k)$,
$$
\frac{\partial{g}}{\partial{x_1}}(x)=2x_1(x_5^2+x_6^2)-2x_4(x_2x_5+x_3x_6)
$$
is always non-positive and
$$
\frac{\partial{g}}{\partial{x_4}}(z)=2x_4(x_2^2+x_3^2)-2x_1(x_2x_5+x_3x_6)
$$
is always non-negative. If $-z_1+z_4$ is less than $2k-1$, decreasing by $1$ the first coordinate of $z$ or increasing by $1$ its fourth coordinate results in a point $z'$ in $\mathcal{Z}(k)$. By the sign of the above derivatives on the line segment with extremities $z$ and $z'$, the image of $z'$ by $g$ is at least $g(z)$. Moreover, $h(z')$ is greater than $h(z)$ by~$1$. As a consequence, the result follows by induction.

Now assume that $-z_1+z_4$ is equal to $2k-1$ and recall that $h(z)$ is less than $6k-6$. Therefore, $z_2$, $z_3$, $z_5$, and $z_6$ cannot all be at least $k-1$. Assume that $z_2$ is less than $k-1$. Increasing by $1$ the second coordinate of $z$ results in a lattice point $z'$ that is still contained in $\mathcal{Z}(k)$. However,
$$
\frac{\partial{g}}{\partial{x_2}}(x)=2x_2(x_4^2+x_6^2)-2x_5(x_1x_4+x_3x_6)
$$
for every point $x$ in $\mathcal{Z}(k)$. As $-z_1+z_4$ is equal to $2k-1$, every point $x$ that belongs to the line segment with extremities $z$ and $z'$ is such that either $x_1$ is equal to $1-k$ and $x_4$ to $k$ or $x_1$ is equal to $-k$ and $x_4$ to $k-1$. In particular, the product $x_1x_4$ is necessarily equal to $-k(k-1)$. However, the product $x_3x_6$ is at most $k(k-1)$ on that line segment and the above partial derivative is therefore non-negative. It follows that $g(z')$ is at least $g(z)$ and since $h(z')$ is greater than $h(z)$ by~$1$, the result follows by induction.

Given that, for every point $x$ in $\mathcal{Z}(k)$,
$$
\begin{array}{l}
\displaystyle\frac{\partial{g}}{\partial{x_3}}(x)=2x_3(x_4^2+x_5^2)-2x_6(x_1x_4+x_2x_5)\mbox{,}\\[\bigskipamount]
\displaystyle\frac{\partial{g}}{\partial{x_5}}(x)=2x_5(x_1^2+x_3^2)-2x_2(x_1x_4+x_3x_6)\mbox{, and}\\[\bigskipamount]
\displaystyle\frac{\partial{g}}{\partial{x_6}}(x)=2x_6(x_1^2+x_2^2)-2x_3(x_1x_4+x_2x_5)\mbox{,}
\end{array}
$$
the same argument by induction shows that the result also holds when $-z_1+z_4$ is equal to $2k-1$ and $z_3$, $z_5$, or $z_6$ is less than $k-1$.
\end{proof}

Observe that for each point $x$ in $\mathcal{B}$, $g\circ\phi_k(x)$ is a polynomial function of $k$ of degree at most $4$. Hence, according to Theorem \ref{DLP2.sec.4.thm.2}, maximizing $g(x)$ over the points in $\mathcal{Z}(k)$ whose image by $h$ is at most $6k-6$ amounts to compare the values in $k$ of a fixed number (that does not depend on $k$) of degree at most $4$ polynomials. Observe that the the square of the denominator of the right-hand side of (\ref{DLP2.sec.3.eq.1}) is the degree $4$ polynomial $8k^4-24k^3+40k^2-28k+10$. It turns out that this polynomial is always greater than $g\circ\phi_k(x)$ when $k$ is at least $6$. This can be checked using symbolic computation. Indeed, the roots of
\begin{equation}\label{DLP2.sec.4.eq.3}
8k^4-24k^3+40k^2-28k+10-g\circ\phi_k(x)
\end{equation}
can be explicitly determined for each point $x$ in $\mathcal{B}$ as well as its sign when $k$ is equal to $6$. The computations show in particular that the largest real root of (\ref{DLP2.sec.4.eq.3}) when $x$ ranges over $\mathcal{B}$ is less than $6$ and that this polynomial is always positive when $k$ is equal to $6$. This results in the following proposition. 

\begin{prop}\label{DLP2.sec.4.prop.1}
For every integer $k$ at least $6$ and every point $x$ in $\mathcal{B}$,
$$
g\circ\phi_k(x)<8k^4-24k^3+40k^2-28k+10\mbox{.}
$$
\end{prop}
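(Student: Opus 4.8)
The plan is to prove the inequality by carrying out the finite symbolic computation announced in the paragraph preceding the statement. First I would record that $\mathcal{B}$ is finite: its points are the nonnegative integer vectors with $x_7=x_8=x_9=0$, with $x_1+\dots+x_6=6$, and with $x_i+x_{i+3}\geq1$ for each $i\in\{1,2,3\}$; an inclusion--exclusion count over the last three constraints gives $\binom{11}{5}-3\binom{9}{3}+3\binom{7}{1}=231$ such points. The claim is therefore a conjunction of finitely many inequalities between polynomials in the single variable $k$, one for each $x\in\mathcal{B}$.

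Before enumerating, I would make one observation that turns the computation into elementary root-finding. For a fixed $x\in\mathcal{B}$, the six coordinates of $\phi_k(x)$ that enter $g$ are affine functions of $k$, with coefficient of $k$ equal to $-1$ for the first coordinate and to $+1$ for the other five. Since $g$, as written in (\ref{DLP2.sec.2.eq.0.5}), is homogeneous of degree $4$ in $x_1,\dots,x_6$, the coefficient of $k^4$ in $g\circ\phi_k(x)$ equals $g$ evaluated at this vector of slopes, namely $g(-1,1,1,1,1,1)=4+4+0=8$, independently of $x$. As $8$ is also the leading coefficient of $8k^4-24k^3+40k^2-28k+10$, the polynomial (\ref{DLP2.sec.4.eq.3}) has degree at most $3$ for every $x\in\mathcal{B}$, so its real roots can be obtained exactly, for instance in closed form or as algebraic numbers in a computer algebra system.

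Then, for each of the $231$ points $x\in\mathcal{B}$, I would have the computer algebra system expand $g\circ\phi_k(x)$, form the polynomial (\ref{DLP2.sec.4.eq.3}) of degree at most $3$, compute its real roots exactly, and evaluate it at $k=6$. The computation confirms the two facts quoted before the statement: for every $x\in\mathcal{B}$, every real root of (\ref{DLP2.sec.4.eq.3}) is strictly less than $6$, and (\ref{DLP2.sec.4.eq.3}) takes a positive value at $k=6$. A real polynomial with no zero in $[6,+\infty)$ has constant sign on that interval, and that sign is positive because the value at $k=6$ is positive; hence (\ref{DLP2.sec.4.eq.3}) is positive for every real $k\geq6$, which is exactly the asserted inequality.

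The only obstacle is bookkeeping: there is nothing conceptually delicate here, but the whole computation must be carried out in exact arithmetic so that the assertions ``the largest real root is less than~$6$'' and ``the value at $k=6$ is positive'' are genuinely proved rather than merely observed numerically, and the enumeration of $\mathcal{B}$ must be verifiably complete. The number of distinct polynomials that actually occur is somewhat smaller than $231$---for instance the transposition exchanging $x_2$ with $x_3$ and $x_5$ with $x_6$ preserves both $\mathcal{B}$ and the function $g\circ\phi_k$---but even the naive case analysis is well within the reach of symbolic computation.
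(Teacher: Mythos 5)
Your proposal is correct and follows essentially the same route as the paper: enumerate the finite set $\mathcal{B}$, form the difference polynomial (\ref{DLP2.sec.4.eq.3}) for each point, and verify by exact symbolic computation that its largest real root is below $6$ and its value at $k=6$ is positive, which gives positivity on $[6,+\infty)$. Your added observations (the count $|\mathcal{B}|=231$ and the fact that the $k^4$ terms cancel, so (\ref{DLP2.sec.4.eq.3}) has degree at most $3$) are correct refinements but do not change the argument.
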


Denote by $\mathcal{A}$ the set of the points $x$ in $\mathbb{N}^7\mathord{\times}\mathbb{Z}^2$ whose first six coordinates sum to at most $5$, whose last three coordinates satisfy
\begin{equation}\label{DLP2.sec.4.eq.2}
\left\{
\begin{array}{l}
x_7\leq{x_1+x_4}\mbox{,}\\
-x_2\leq{x_8}\leq{x_5}\mbox{,}\\
-x_3\leq{x_9}\leq{x_6}\mbox{,}\\
\end{array}
\right.
\end{equation}
and such that $x_i+x_{i+3}$ is at least $1$ when $1\leq{i}\leq3$. Again, $\mathcal{A}$ is a finite set. Moreover, $\phi_k(\mathcal{A})$ contains all the points $x$ in $\mathcal{Y}(k)\cap\mathcal{Z}(k)$ such that $h(x)$ is at least $6k-5$. Note in particular that for any point $x$ in $\mathbb{N}^7\mathord{\times}\mathbb{Z}^2$, if the coordinates of $\phi_k(x)$ satisfy~(\ref{DLP2.sec.3.eq.2}), then the coordinates of $x$ satisfy (\ref{DLP2.sec.4.eq.2}).

Combining Theorems \ref{DLP2.sec.3.thm.1}, \ref{DLP2.sec.4.thm.1}, and \ref{DLP2.sec.4.thm.2} with Proposition \ref{DLP2.sec.4.prop.1} makes it possible to provide, when $k$ is at least $6$, a lower bound on $\varepsilon(3,k)$ that only depends on $f\circ\phi_k(x)$ and $g\circ\phi_k(x)$ where $x$ ranges over $\mathcal{A}$.

\begin{thm}\label{DLP2.sec.4.thm.3}
For every integer $k$ at least $6$, there exists a point $x$ in $\mathcal{A}$ such that $f\circ\phi_k(x)$ is equal to $1$, $g\circ\phi_k(x)$ is positive, and
$$
\varepsilon(3,k)\geq\frac{1}{\sqrt{g\circ\phi_k(x)}}\mbox{.}
$$
\end{thm}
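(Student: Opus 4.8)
The plan is to concatenate the statements proved so far. First I would invoke Theorem~\ref{DLP2.sec.3.thm.1}, which holds for every positive $k$, to obtain a lattice point $x\in\mathcal{X}(k)\cap\mathcal{Y}(k)$ with $f(x)\neq 0$, $g(x)>0$, and $\varepsilon(3,k)\geq|f(x)|/\sqrt{g(x)}$. As $k\geq 6\geq 5$ and this last inequality is precisely~(\ref{DLP2.sec.4.thm.1.eq.1}), Theorem~\ref{DLP2.sec.4.thm.1} applies to $x$ and yields $|f(x)|=1$ and $x\in\mathcal{Z}(k)$; in particular $\varepsilon(3,k)\geq 1/\sqrt{g(x)}$.

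The next step is to prove that $h(x)\geq 6k-5$. Suppose, for contradiction, that $h(x)\leq 6k-6$. Since $k\geq 6$ and $x$ is a lattice point of $\mathcal{Z}(k)$, Theorem~\ref{DLP2.sec.4.thm.2} produces a point $y\in\mathcal{B}$ with $g\circ\phi_k(y)\geq g(x)$, and Proposition~\ref{DLP2.sec.4.prop.1} gives $g\circ\phi_k(y)<8k^4-24k^3+40k^2-28k+10$, a polynomial that equals $2(2k^2-4k+5)(2k^2-2k+1)$. Hence $g(x)<2(2k^2-4k+5)(2k^2-2k+1)$, so $1/\sqrt{g(x)}$ is strictly larger than the right-hand side of~(\ref{DLP2.sec.3.eq.1}); combined with $\varepsilon(3,k)\geq 1/\sqrt{g(x)}$, this contradicts~(\ref{DLP2.sec.3.eq.1}). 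Therefore $h(x)\geq 6k-5$. Since $x\in\mathcal{Y}(k)\cap\mathcal{Z}(k)$, the inclusion $\phi_k(\mathcal{A})\supseteq\{z\in\mathcal{Y}(k)\cap\mathcal{Z}(k):h(z)\geq 6k-5\}$ recalled just before the theorem gives a point $y\in\mathcal{A}$ with $\phi_k(y)=x$, so that $g\circ\phi_k(y)=g(x)>0$, $\varepsilon(3,k)\geq 1/\sqrt{g\circ\phi_k(y)}$, and $f\circ\phi_k(y)=f(x)$, a number of absolute value $1$.

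It remains to arrange $f\circ\phi_k(y)=1$ rather than just $|f\circ\phi_k(y)|=1$. If $f(x)=-1$, I would apply to $x$ the permutation of the coordinates of $\mathbb{R}^3$ that exchanges the second and third axes, which replaces $x$ by $x'=(x_1,x_3,x_2,x_4,x_6,x_5,x_7,x_9,x_8)$. Going through the defining conditions, $x'$ is again in $\mathcal{Y}(k)\cap\mathcal{Z}(k)$ with $h(x')=h(x)\geq 6k-5$, while $g(x')=g(x)$ and $f(x')=-f(x)=1$: indeed $g$ is the determinant of $A^tA$, hence unchanged when two rows of $A$ are transposed, and $f$ is, up to sign, the determinant of the $3\times 3$ matrix with columns $(x_1,x_2,x_3)$, $(x_4,x_5,x_6)$, $(x_7,x_8,x_9)$, hence changes sign when two of these rows are transposed. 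Applying the inclusion above to $x'$ produces the desired point of $\mathcal{A}$. I expect the main obstacle to be the bookkeeping in this last paragraph, namely verifying that $x$ and its variant $x'$ satisfy every inequality defining $\mathcal{Y}(k)\cap\mathcal{Z}(k)$ so that they genuinely pull back into $\mathcal{A}$ under $\phi_k$, together with the sign normalization; the remainder is a routine assembly of the earlier results.
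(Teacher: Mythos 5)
Your proof follows essentially the same route as the paper's: combine Theorem~\ref{DLP2.sec.3.thm.1} with Theorem~\ref{DLP2.sec.4.thm.1} to get a lattice point of $\mathcal{Y}(k)\cap\mathcal{Z}(k)$ with $|f|=1$ and $\varepsilon(3,k)\geq1/\sqrt{g}$, rule out $h\leq 6k-6$ by playing Theorem~\ref{DLP2.sec.4.thm.2} and Proposition~\ref{DLP2.sec.4.prop.1} against the upper bound~(\ref{DLP2.sec.3.eq.1}), and pull the point back through $\phi_k$ into $\mathcal{A}$. Your final paragraph normalizing the sign of $f$ via the coordinate swap $2\leftrightarrow3$ (which fixes $g$, $h$, $\mathcal{Y}(k)$, $\mathcal{Z}(k)$ and negates $f$) is correct and in fact supplies a detail the paper's own proof leaves implicit, since that proof only establishes $|f|=1$, the form in which the condition is later used in Proposition~\ref{DLP2.sec.4.prop.2}.
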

\begin{proof}
Assume that $k$ is at least $6$. By Theorems \ref{DLP2.sec.3.thm.1} and \ref{DLP2.sec.4.thm.1}, there exists a point $z$ in $\mathcal{Y}(k)\cap\mathcal{Z}(k)$ such that $|f(z)|$ is equal to $1$, $g(z)$ is positive, and 
\begin{equation}\label{DLP2.sec.4.thm.3.eq.1}
\varepsilon(3,k)\geq\frac{1}{\sqrt{g(z)}}\mbox{.}
\end{equation}

It follows from Theorem \ref{DLP2.sec.4.thm.2} and Proposition \ref{DLP2.sec.4.prop.1} that $h(z)$ is at least $6k-5$. Indeed, otherwise, these two results would imply that $g(z)$ is less than the square of the denominator in the right-hand side of (\ref{DLP2.sec.3.eq.1}). In that case, (\ref{DLP2.sec.4.thm.3.eq.1}) would contradict~(\ref{DLP2.sec.3.eq.1}). As a consequence, $z$ is a point in $\mathcal{Y}(k)\cap\mathcal{Z}(k)$ such that $h(z)$ is at least $6k-5$ and therefore, this point is contained in $\phi_k(\mathcal{A})$. Taking, for $x$ the preimage of $z$ by $\phi_k$ completes the proof.
\end{proof}

For every point $x$ in $\mathcal{A}$, both $f\mathord{\circ}\phi_k(x)$ and $g\mathord{\circ}\phi_k(x)$ are polynomial functions of $k$, the former being of degree at most $3$ and the latter of degree at most $4$. By Theorem \ref{DLP2.sec.4.thm.3}, one can obtain a lower bound on $\varepsilon(3,k)$ when $k$ is at least $6$ by computing these two polynomials for every point $x$ in $\mathcal{A}$ and by checking whether the first one is equal to $1$ or to $-1$ for certain values of $k$ and, among the points $x$ such that this property holds, to pick the one for which the value in $k$ of the second polynomial is maximal. This requires solving a finite number of polynomial equations of degree at most $4$. As a consequence, using symbolic computation, we obtain the following proposition.

\begin{prop}\label{DLP2.sec.4.prop.2}
All the points $x$ in $\mathcal{A}$ such that 
\begin{enumerate}
\item[(i)] $|f\circ\phi_k(x)|$ is equal to $1$ and
\item[(ii)] $g\circ\phi_k(x)$ is not less than $8k^4-24k^3+40k^2-28k+10$
\end{enumerate}
for some integer $k$ at least $6$ correspond, up to the transformations described in Remark \ref{DLP2.sec.2.rem.1} to the pair $P^\star$ and $Q^\star$ of segments described  in Section \ref{DLP2.sec.2}.
\end{prop}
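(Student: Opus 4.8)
The statement is a finite verification, and the plan is to carry it out by exhaustively enumerating $\mathcal{A}$ together with symbolic root-finding. First I would note that $\mathcal{A}$ is indeed finite and small enough to list: its first six coordinates are non-negative integers whose sum is at most $5$, and once these are fixed the last three coordinates are confined to the bounded intervals prescribed by (\ref{DLP2.sec.4.eq.2}). For each $x$ in $\mathcal{A}$, every coordinate of $\phi_k(x)$ is an affine function of $k$ --- either $\pm k$ plus an integer constant, or an integer constant --- so substituting $\phi_k(x)$ into $f$ and into $g$ produces polynomials $f\circ\phi_k(x)$ in $\mathbb{Z}[k]$ of degree at most $3$ and $g\circ\phi_k(x)$ in $\mathbb{Z}[k]$ of degree at most $4$, which can be computed explicitly for every point of $\mathcal{A}$.

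The next step is to impose condition (i). For a given $x$, either $f\circ\phi_k(x)$ is a constant polynomial, in which case (i) holds for every $k$ precisely when that constant is $1$ or $-1$, or $f\circ\phi_k(x)$ is non-constant, in which case the equations $f\circ\phi_k(x)=1$ and $f\circ\phi_k(x)=-1$ are polynomial equations of degree at most $3$ whose real roots are determined in closed form; condition (i) then holds only at those integer roots that are at least $6$, a finite and possibly empty set. This leaves a sublist of $\mathcal{A}$ together with, for each surviving $x$, a set $K(x)$ of admissible values of $k$. Then one imposes condition (ii): for a surviving pair $(x,k)$ with $k$ in $K(x)$, form the degree at most $4$ polynomial
$$
D_x(k)=8k^4-24k^3+40k^2-28k+10-g\circ\phi_k(x)\mbox{,}
$$
compute its real roots in closed form and its value at $k=6$, and thereby decide for which integers $k$ at least $6$ one has $D_x(k)\leq0$; condition (ii) is exactly the existence of $k$ in $K(x)$ with $D_x(k)\leq0$.

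I expect the computation to show that (i) and (ii) hold simultaneously for some integer $k$ at least $6$ only for the orbit, under the transformations of Remark \ref{DLP2.sec.2.rem.1}, of the point $\bar{x}$ for which $\phi_k(\bar{x})$ is, via (\ref{DLP2.sec.2.eq.2}) and (\ref{DLP2.sec.2.eq.2.1}), the matrix with columns $(-k,k-3,k-1)$ and $(k-1,k,k)$ together with the vector $(-k,-2,-1)$; for this point $f\circ\phi_k(\bar{x})$ is identically $-1$ and $D_{\bar{x}}$ is identically $0$, so (i) and (ii) hold for all $k$ at least $6$, with equality in (ii). It then remains to recognise this algebraic datum geometrically: undoing (\ref{DLP2.sec.2.eq.2})--(\ref{DLP2.sec.2.eq.2.1}), the matrix and vector above are exactly those produced in Section \ref{DLP2.sec.2} from the segment $P^\star$ with vertices $(k,2,1)$ and $(0,k-1,k)$ and the segment $Q^\star$ with vertices the origin and $(k-1,k,k)$, while the remaining points of the orbit are accounted for by the transformations of Remark \ref{DLP2.sec.2.rem.1}.

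The main obstacle is not any individual calculation but the bookkeeping of the case analysis over all of $\mathcal{A}$: the quantifier ``for some integer $k$ at least $6$'' forces one to track the sporadic integer roots of a non-constant $f\circ\phi_k(x)$ separately from the generic situation in which $f\circ\phi_k(x)$ is a constant, and the final identification of the surviving $x$ with the pair $P^\star,Q^\star$ modulo the symmetry group of Remark \ref{DLP2.sec.2.rem.1} must be carried out carefully. Both of these are mechanical once organised, which is why they are entrusted to symbolic computation.
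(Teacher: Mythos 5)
Your proposal is correct and follows essentially the same route as the paper: both reduce the statement to a finite symbolic computation over $\mathcal{A}$ (computing $f\circ\phi_k(x)$ and $g\circ\phi_k(x)$ as polynomials in $k$ of degree at most $3$ and $4$, solving the resulting degree at most $4$ equations, and identifying the surviving points with $P^\star$ and $Q^\star$ modulo the transformations of Remark \ref{DLP2.sec.2.rem.1}). Your worked example is consistent with the paper's data: the point you exhibit is $(0,3,1,1,0,0,0,-2,-1)$, which appears among the eight surviving points in Table \ref{DLP2.sec.4.tab.1}, with $f\circ\phi_k$ identically $-1$ and $g\circ\phi_k$ identically equal to $8k^4-24k^3+40k^2-28k+10$.
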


There are precisely eight points $x$ in $\mathcal{A}$ that satisfy the assertions (i) and~(ii) in the statement of Proposition \ref{DLP2.sec.4.prop.2} for some integer $k$ at least $6$. These points are reported in Table \ref{DLP2.sec.4.tab.1} as vectors of coordinates. Since these points all correspond to the segments $P^\star$ and $Q^\star$ up to the transformations described in Remark~\ref{DLP2.sec.2.rem.1}, they must satisfy the assertions (i) and~(ii) in the statement of Proposition~\ref{DLP2.sec.4.prop.2} for every integer $k$ at least $6$ and not just for some of these integers. By this observation, Theorem \ref{DLP2.sec.1.thm.1} follows from Theorem \ref{DLP2.sec.4.thm.3} and Proposition \ref{DLP2.sec.4.prop.2}. In particular, according to the values of $\varepsilon(3,1)$, $\varepsilon(3,2)$, $\varepsilon(3,4)$ and $\varepsilon(3,5)$ reported in \cite{DezaLiuPournin2024}, the theorem indeed holds when $k$ is equal to $1$, $2$, $4$, or $5$ even though these four values are not covered by Theorem \ref{DLP2.sec.4.thm.3} and Proposition \ref{DLP2.sec.4.prop.2}.

\begin{table}[ht!]
\begin{tabular}{cc}
(0,1,3,1,0,0,0,-1,-2) & (0,1,3,1,0,0,1,0,-1)\\
(0,3,1,1,0,0,0,-2,-1) & (0,3,1,1,0,0,1,-1,0)\\
(1,0,0,0,1,3,0,1,2) & (1,0,0,0,1,3,1,0,1)\\
(1,0,0,0,3,1,0,2,1) & (1,0,0,0,3,1,1,1,0)\\
\end{tabular}
\caption{The eight lattice points $x$ in $\mathcal{A}$ such that for some integer $k$ at least $6$ both the assertion (i) and the assertion (ii) in the statement of Proposition \ref{DLP2.sec.4.prop.2} hold.}\label{DLP2.sec.4.tab.1}
\end{table}

\bibliography{KissingPolytopesDimension3}
\bibliographystyle{ijmart}

\end{document}